\documentclass[11pt]{amsart}

\usepackage{amsfonts,amssymb,amsthm}
\usepackage{etoolbox}
\usepackage{latexsym}
\usepackage{mathrsfs}
\usepackage{bm,thmtools}

\usepackage{amsmath}
\usepackage{mathrsfs}

\usepackage{xcolor}
\usepackage{esint}
\usepackage[pagebackref=true, colorlinks=true, citecolor=blue, pdfborder={0 0 .1},breaklinks]{hyperref}


\title[Nonlinear parabolic model equations]
{Existence and analyticity of solutions of nonlinear parabolic model equations with singular data}
\author[D.M. Ambrose]{David M. Ambrose}
\address{Drexel University, Department of Mathematics, Philadelphia, PA 19104, USA}
\email{dma68@drexel.edu}
\author[M.C. Lopes Filho]{Milton C. Lopes Filho}
\address{Instituto de Matematica, Universidade Federal do Rio de Janeiro, Caixa Postal 68530,
Rio de Janeiro, RJ, 21941-909 Brazil}
\email{mlopes@im.ufrj.br}
\author[H.J. Nussenzveig Lopes]{Helena J. Nussenzveig Lopes}
\address{Instituto de Matematica, Universidade Federal do Rio de Janeiro, Caixa Postal 68530,
Rio de Janeiro, RJ, 21941-909 Brazil}
\email{hlopes@im.ufrj.br}

\keywords{Kuramoto-Sivashinsky, flame fronts, Constantin-Lax-Majda, vortex stretching, radius of analyticity, low-regularity data}

\subjclass{35K46; 35R05; 35B65; 35A01; 35A20}

\begin{document}

\newtheorem{theorem}{Theorem}
\newtheorem{corollary}[theorem]{Corollary}
\newtheorem{lemma}[theorem]{Lemma}
\newtheorem{assumption}{Assumption}
\newtheorem{remark}[theorem]{Remark}
\newtheorem{claim}[theorem]{Claim}

\newcommand{\vertiii}[1]{{\left\vert\kern-0.25ex\left\vert\kern-0.25ex\left\vert #1
    \right\vert\kern-0.25ex\right\vert\kern-0.25ex\right\vert}}

\begin{abstract} We explore two approaches to proving existence and analyticity of solutions to
nonlinear parabolic differential equations.  One of these methods works well for more general nonlinearities,
while the second method gives stronger results when the nonlinearity is simpler.
The first approach uses the exponentially weighted
Wiener algebra, and is related to prior work of Duchon and Robert for vortex sheets.  The second approach uses
two norms, one with a supremum in time and one with an integral in time, with the integral norm
representing the parabolic gain of regularity.
As an example of the first approach we prove analyticity of small solutions of a class of generalized
one-dimensional Kuramoto-Sivashinsky  equations, which model the motion of flame fronts and other phenomena.
To illustrate the second approach, we prove existence and analyticity of solutions of
the dissipative Constantin-Lax-Majda equation (which models vortex stretching), with and without added advection, with two classes of rough data.
The classes of data treated include both data in the Wiener algebra with negative-power weights, as well
as data in pseudomeasure spaces with negative-power weights.
\end{abstract}

\maketitle

\section{Introduction}
For many partial differential equations arising from applications, diffusive effects are essential to the modeling.
For such equations,
the diffusion term allows gain of regularity over time, and this smoothing effect may allow for existence of smooth solutions
starting from even quite rough data.  We will demonstrate two approaches to bring out this behavior, showing existence
of solutions analytic at positive times starting from data in some cases so rough that the Fourier coefficients do not even
decay.  We illustrate the method on two different classes of parabolic model equations: a class of one-dimensional
generalized Kuramoto-Sivashinsky equations, and the Constantin-Lax-Majda equation (with and without advection).

The study of analyticity of solutions is highly relevant for numerical methods for these equations.  Of course, for spectral
methods, one must have exponential decay of the Fourier coefficients for the method to work.  
As observed in \cite{titi} for Ginzburg-Landau equations, Galerkin methods
converge exponentially fast, with the rate determined by the radius of analyticity of solutions.
Since we prove that our solutions
are analytic (and thus the Fourier coefficients do decay exponentially) at all positive times, it suggests that a method such as
a splitting method would be effective for computing solutions of these equations with rough data.  That is, to initialize the
numerical method, the parabolic semigroup could be applied to non-decaying Fourier data for a short time (such as for half of a timestep
if using Strang splitting, but precisely
how long depends on the particular splitting method), and this could be computed analytically.  This results in an exponentially
decaying Fourier series.  Then one applies the nonlinear evolution, which can be handled now by spectral methods.  The linear and
nonlinear evolutions can then be alternated numerically, and we expect that such a method could be proven to converge, as in
for example \cite{bealeMajdaViscousSplitting}.  

The first analytical method we use is a generalization of the method employed by Duchon and Robert to prove existence of solutions
for all time for the classical vortex sheet \cite{duchonRobert}.  In this method, one demonstrates that the mild formulation
of the evolution gives a contraction mapping in exponentially weighted Wiener algebras.  The first author and collaborators
have employed this strategy a number of times, including for mean field games \cite{CRAS2}, \cite{JMPA},
epitaxial growth of thin films \cite{BLMS}, ill-posed Boussinesq equations and classical vortex sheets
\cite{ambroseBonaMilgrom}, \cite{milgromAmbrose},
and the two-dimensional Kuramoto-Sivashinsky equation \cite{ambroseMazzucato}, \cite{ambroseMazzucato2}.  The
method has also been employed by Beck, Sosoe, and Wong to demonstrate existence of solutions for other problems
in interfacial fluid flow \cite{beckSosoeWong}.
A strength of the method is that by relying upon the algebra structure of the function spaces
(inherited from the Wiener algebra), analytic nonlinearities may be studied as easily as simpler (such as quadratic)
nonlinearities.  A drawback of the method is that because the algebra structure is needed, only a certain amount of
roughness in the data can be handled.  Specifically, the method works for data in the Wiener algebra, or with a
positive number of derivatives in the Wiener algebra.  The method does not allow the data to be the derivative of a
Wiener function.

We employ the first method in a study the family of one-dimensional generalized Kuramoto-Sivashinsky equations
\begin{equation}\label{ottoFamily}
u_{t}=u^{m}u_{x}-\nu\Lambda^{b}u + \Lambda^{a}u.
\end{equation}
The operator $\Lambda$ is $\Lambda=\sqrt{-\partial_{xx}},$ which has Fourier symbol $\hat\Lambda(k)=|k|.$
The parameters in \eqref{ottoFamily} satisfy $0<a<b,$ and thus the operator $\Lambda^{b}$ has the correct sign to
make this a forward-parabolic problem, although the lower-order term $\Lambda^{a}$ will typically have
a destabilizing influence.  The nonlinearity parameter $m$ is taken to be a positive integer.
When $a=4,$ $b=2,$ and $m=1,$ \eqref{ottoFamily} is exactly the one-dimensional Kuramoto-Sivashinsky equation.
For an exploration of more general dynamics for Kuramoto-Sivashinsky-like equations with more general nonlinearities,
we refer the reader to \cite{au}.
The analyticity of solutions of certain members of the family \eqref{ottoFamily} was
studied by Papageorgiou, Smyrlis, and Tomlin in
\cite{papageorgiouEtAl}.  In particular, the choice $a=b/2$ is made, and analyticity of the solutions of
the
resulting one-parameter family of equations (called there the \emph{Otto equations})
is studied both analytically and numerically.  The case $b=1$ is identified
as a borderline case for analyticity.  Naturally one would not expect analyticity for $b<1;$ in this case, one cannot
expect exponential decay of the Fourier series, although one could study the associated Gevrey regularity of solutions.
For $b>1,$ Papageorgiou et al. find that solutions after a long time settle down to be analytic with a certain radius
$\rho(b)>0,$ but that $\rho(b)\rightarrow0$ as $b\rightarrow 1+.$  Granero-Belinch\'{o}n and Hunter have also studied
this problem, giving a proof of analyticity of solutions for $b>1$ \cite{graneroBelinchonHunter}.  A related problem
has been studied by Kiselev, Nazarov, and Shterenberg, showing analyticity for $b\geq1$ but in the absence of the linearly
destabilizing term (i.e., the $\Lambda^{a}$ term is not present at all) \cite{kiselevEtAl}.
In contrast to the numerical result of Papageorgiou et al.,
we will show that small solutions starting from Wiener data instantly gain analyticity, even in the case $b=1.$

The second method we employ to demonstrate existence and analyticity of solutions of parabolic equations with rough
data is adapted from works such as that of Bae for the three-dimensional Navier-Stokes equations \cite{baePAMS}.
This built on the work of Bae, Biswas, and Tadmor, and uses two norms, one with a supremum in time and a low level
of spatial regularity, and the other with an integral in time and a higher level of spatial regularity
\cite{baeBiswasTadmor}.  This method is more restrictive in terms of nonlinearities which may be treated than the
first method based on the Duchon-Robert work \cite{duchonRobert}.  In particular, for this second method, we use
a fixed point theorem that relies upon a bilinear estimate; thus, we are restricted in applying this method to problems
with quadratic nonlinearity.  Fortunately, many problems of interest do have quadratic nonlinearities, such as the
Navier-Stokes equations and the Kuramoto-Sivashinsky equation \cite{ALN5}, \cite{ALN6}.
We now apply this method to certain generalizations of the Constantin-Lax-Majda equation
\cite{CLM}.  In particular, we treat the generalized Constantin-Lax-Majda equation which includes the effects of diffusion
and advection, with the form of advection being that introduced by Okamoto, Sakajo, and Wunsch \cite{okamotoEtAl}.
We prove several results for the generalized Constantin-Lax-Majda equations, including existence of solutions for
initial data in negative-index Wiener-type spaces, and existence of solutions in negative-index pseudomeasure
spaces.  We believe that this is the first work exploring the solvability of generalizations of the Constantin-Lax-Majda
equation with such low regularity data.

Of course other authors have used many times similar two-norm approaches; however, our method differs from the usual
Kato two-norm method \cite{kato}.  In the Kato approach, when studying an unknown, $v,$
data is taken in a Banach space $\mathscr{X}$
and solutions are shown to also be in a space $\mathscr{Y}$ at positive times, through control of
\begin{equation}\label{katoVersion}
\sup_{t>0} t^{\rho}\|v(\cdot,t)\|_{\mathscr{Y}},
\end{equation}
for some appropriately chosen value of $\rho>0.$  This is quite prescriptive of the behavior of the $\mathscr{Y}$-norm
as $t\rightarrow0^{+}.$ Following Bae \cite{baePAMS}, we instead control the integral with respect to
time of the higher-regularity norm,
\begin{equation}\label{baeVersion}
\int_{0}^{\infty}\|v(\cdot,t)\|_{\mathscr{Y}}\ \mathrm{d}t.
\end{equation}
Clearly, control of either \eqref{katoVersion} or \eqref{baeVersion} achieves the desired goals: the data is not required
to be in the space $\mathscr{Y},$ but the solution at positive times does possess this higher regularity.  We find the
version \eqref{baeVersion} to be more robust in that it gives better gain of regularity.  For example using a version
of \eqref{katoVersion}, Cannone and Karch found a gain of less than one derivative at positive times
for the three-dimensional Navier-Stokes equations starting from pseudomeasure data \cite{cannoneKarch},
while in a recent work the authors have demonstrated that a version of \eqref{baeVersion} gives the full gain of two derivatives
\cite{ALN6}.

The plan of the paper is as follows.  We introduce the function spaces we will use, which are weighted Wiener spaces
and pseudomeasure spaces, in Section \ref{functionSpacesSection}.  We give abstract results for both our first method and
our second method in Section \ref{abstractResultsSection}.  We apply the first method to prove existence and analyticity
of solutions of the generalized one-dimensional Kuramoto-Sivashinsky equation in Section \ref{ottoSection}.
We apply the second method to the dissipative Constantin-Lax-Majda equation in Section \ref{CLMSection}.
We then apply the second method to the dissipative generalized Constantin-Lax-Majda equation (i.e., generalized to
include advection) in
Section \ref{gCLMSection}.  We conclude with some final remarks in Section \ref{discussionSection}, including remarks
on critical spaces for the dissipative Constantin-Lax-Majda equation.

\section{Function spaces}\label{functionSpacesSection}

We will use the notation $\mathbb{Z}_{*}$ to denote the nonzero integers, i.e. $\mathbb{Z}_{*}=\mathbb{Z}\setminus\{0\}.$

We define the space $B_{0}$ to be the Wiener algebra, which is the space of functions with Fourier series in $\ell^{1}.$
For $f\in B_{0},$ the norm is
\begin{equation*}
\|f\|_{B_{0}}=\sum_{k\in\mathbb{Z}}|\hat{f}(k)|.
\end{equation*}
We define a space-time version of the Wiener algebra with parameter $\alpha>0,$ which we call $\mathcal{B}_{\alpha}.$
For $f\in\mathcal{B}_{\alpha},$ the norm is
\begin{equation*}
\|f\|_{\mathcal{B}_{\alpha}}=\sum_{k\in\mathbb{Z}}\sup_{t\in[0,\infty)}e^{\alpha t|k|}|\hat{f}(k,t)|.
\end{equation*}
The space $B_{0}$ is famously an algebra, and this algebra property is inherited by $\mathcal{B}_{\alpha}.$  Specifically,
for any $f\in\mathcal{B}_{\alpha},$ for any $g\in\mathcal{B}_{\alpha},$ the product $fg$ is also in $\mathcal{B}_{\alpha},$ with
estimate
\begin{equation}\label{alphaAlgebra}
\|fg\|_{\mathcal{B}_{\alpha}}\leq c\|f\|_{\mathcal{B}_{\alpha}}\|g\|_{\mathcal{B}_{\alpha}}.
\end{equation}
We also will use a version of the $\mathcal{B}_{\alpha}$ spaces adapted to finite time intervals.  Given $T>0,$
we introduce $\mathcal{B}_{\alpha,T},$ with the norm
\begin{equation*}
\|f\|_{\mathcal{B}_{\alpha,T}}=\sum_{k\in\mathbb{Z}}\sup_{t\in[0,T]}e^{\alpha t|k|}|\hat{f}(k,t)|.
\end{equation*}
This is again an algebra:
\begin{equation*}
\|fg\|_{\mathcal{B}_{\alpha,T}}\leq c\|f\|_{\mathcal{B}_{\alpha,T}}\|g\|_{\mathcal{B}_{\alpha,T}}.
\end{equation*}

We will define four other scales of function spaces on space-time (with semi-infinite time interval),
which we will call $\mathcal{X}^{s},$
$\mathcal{Y}^{s},$ $\mathcal{Z}^{s},$ and $\mathcal{PM}^{s}.$
These differ in how the norm is calculated with respect to time and with respect to space.
First, we introduce the two of these which are based on the Wiener algebra.
Unlike the $\mathcal{B}_{\alpha}$ spaces, these do not incorporate an exponential weight.
First, the norm for the space $\mathcal{Y}^{s}$ is
\begin{equation*}
\|f\|_{\mathcal{Y}^{s}}=\sum_{k\in\mathbb{Z}}\sup_{t\geq0}|k|^{s}|\hat{f}(k,t)|.
\end{equation*}
By contrast, the spaces $\mathcal{X}^{s}$ take the integral with respect to time:
\begin{equation*}
\|f\|_{\mathcal{X}^{s}}=\sum_{k\in\mathbb{Z}}\int_{0}^{\infty}|k|^{s}|\hat{f}(k,t)|\ \mathrm{d}t.
\end{equation*}
We will also need the time-independent version of the spaces $\mathcal{Y}^{s},$ which we call
$Y^{s}.$  The norm for these spaces is
\begin{equation*}
\|f\|_{Y^{s}}=\sum_{k\in\mathbb{Z}}|k|^{s}|\hat{f}(k)|.
\end{equation*}
Of course, if $s=0,$ then we have $Y^{0}=B_{0},$ which is the Wiener algebra.

The spaces $\mathcal{PM}^{s}$ and $\mathcal{Z}^{s}$ are similar, but take the weighted Fourier series in $\ell^{\infty}$
rather than in $\ell^{1}.$  The norm for the pseudomeasure space $\mathcal{PM}^{s}$ is
\begin{equation*}
\|f\|_{\mathcal{PM}^{s}}=\sup_{k\in\mathbb{Z}}\sup_{t\geq0}|k|^{s}|\hat{f}(k,t)|.
\end{equation*}
The time-independent version of this is $PM^{s},$ with norm given by
\begin{equation*}
\|f\|_{PM^{s}}=\sup_{k\in\mathbb{Z}}|k|^{s}|\hat{f}(k)|.
\end{equation*}
The space $\mathcal{Z}^{s}$ is similar but takes an integral in time rather than a supremum in time; its norm is
\begin{equation*}
\|f\|_{\mathcal{Z}^{s}}=\sup_{k\in\mathbb{Z}}\int_{0}^{\infty}|k|^{s}|\hat{f}(k,t)|\ \mathrm{d}t.
\end{equation*}

Note that for $s\neq0$ technically these are not norms in that constant functions would have zero norm.
However we will be applying
these exclusively to periodic functions with zero mean, and these are norms on the appropriate subspaces, then.
Furthermore, when considering zero mean functions, there is no problem taking $s<0.$

\section{Abstract results}\label{abstractResultsSection}

In this section, we state two fixed point results

\subsection{First fixed point result}\label{firstFixedPointSection}
The following fixed point result is inspired by the work of Duchon and Robert
for vortex sheets in incompressible fluids \cite{duchonRobert}.  They do not state an abstract result, but the following
framework is implicit in their results.  This method has been applied to other interfacial flow problems
\cite{beckSosoeWong}, \cite{milgromAmbrose},
a problem in epitaxial growth of thin films \cite{BLMS}, mean field games \cite{CRAS2}, \cite{JMPA},
ill-posed Boussinesq equation of Bona-Chen-Saut type \cite{ambroseBonaMilgrom}, and the Kuramoto-Sivashinsky equation
\cite{ambroseMazzucato}.

Let $X,$ $\mathcal{X},$ and $\mathcal{Y}$ be Banach spaces.  Let $S:X\rightarrow\mathcal{X},$
and let $I^{+}:\mathcal{Y}\rightarrow\mathcal{X}$ be a bounded linear operator.
Also, let $N:\mathcal{X}\rightarrow\mathcal{Y}.$  Here, $S$ is meant to represent the semigroup associated
to a linear evolution, and $N$ is meant to represent the nonlinearity in a nonlinear evolution equation.
We state a Lipschitz condition on $N.$

\begin{assumption}\label{firstAssumption}
There exists $f:\mathbb{R}^{+}\times\mathbb{R}^{+}\rightarrow\mathbb{R}^{+}$ such that
$f$ is continuous and monotone in each argument,
and $f(0,0)=0,$ and such that for all $u\in\mathcal{X},$ for all $v\in\mathcal{X},$
$\|N(u)-N(v)\|_{\mathcal{Y}}\leq f(\|u\|_{\mathcal{X}},\|v\|_{\mathcal{X}})\|u-v\|_\mathcal{X}.$
\end{assumption}

Under this assumption, we prove the following fixed point result.

\begin{lemma}\label{firstAbstractTheorem}
Assume Assumption \ref{firstAssumption}.
If there exists $r_{1}>\|I^{+}\|_{op}\|N(0)\|_{\mathcal{Y}}$ and there exists $r_{0}>0$ such that
$f(r_{0}+r_{1},0)(r_{0}+r_{1})\leq \frac{r_{1}}{\|I^{+}\|_{op}}-\|N(0)\|_{\mathcal{Y}},$
and if $\|I^{+}\|_{op}f(r_{0}+r_{1},r_{0}+r_{1})<1,$ then for any $u_{0}\in X$ such that
$\|Su_{0}\|_\mathcal{X}\leq r_{0},$ there exists a solution of
\begin{equation*}
u=Su_{0}+I^{+}(N(u)).
\end{equation*}
\end{lemma}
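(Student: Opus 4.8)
The plan is to run a standard contraction-mapping argument on the closed ball $\mathcal{B} = \{u \in \mathcal{X} : \|u - Su_0\|_{\mathcal{X}} \le r_1\}$ in the Banach space $\mathcal{X}$. Define the map $\Phi(u) = Su_0 + I^{+}(N(u))$; a fixed point of $\Phi$ is exactly a solution of the desired equation. First I would check that $\Phi$ maps $\mathcal{B}$ into itself. For $u \in \mathcal{B}$ we have $\|u\|_{\mathcal{X}} \le \|Su_0\|_{\mathcal{X}} + r_1 \le r_0 + r_1$ by the triangle inequality and the hypothesis $\|Su_0\|_{\mathcal{X}} \le r_0$. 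Then
\begin{equation*}
\|\Phi(u) - Su_0\|_{\mathcal{X}} = \|I^{+}(N(u))\|_{\mathcal{X}} \le \|I^{+}\|_{op}\|N(u)\|_{\mathcal{Y}} \le \|I^{+}\|_{op}\bigl(\|N(u) - N(0)\|_{\mathcal{Y}} + \|N(0)\|_{\mathcal{Y}}\bigr).
\end{equation*}
Applying Assumption \ref{firstAssumption} with $v = 0$ and the monotonicity of $f$ gives $\|N(u) - N(0)\|_{\mathcal{Y}} \le f(\|u\|_{\mathcal{X}}, 0)\|u\|_{\mathcal{X}} \le f(r_0 + r_1, 0)(r_0 + r_1)$, so
\begin{equation*}
\|\Phi(u) - Su_0\|_{\mathcal{X}} \le \|I^{+}\|_{op}\bigl(f(r_0+r_1,0)(r_0+r_1) + \|N(0)\|_{\mathcal{Y}}\bigr) \le \|I^{+}\|_{op}\cdot\frac{r_1}{\|I^{+}\|_{op}} = r_1,
\end{equation*}
where the second inequality is precisely the first quantitative hypothesis $f(r_0+r_1,0)(r_0+r_1) \le \frac{r_1}{\|I^{+}\|_{op}} - \|N(0)\|_{\mathcal{Y}}$. (One also notes $r_1 > \|I^{+}\|_{op}\|N(0)\|_{\mathcal{Y}}$ guarantees the right-hand side of that hypothesis is positive, so the ball is nonempty in a meaningful sense.) Hence $\Phi(\mathcal{B}) \subseteq \mathcal{B}$.

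Next I would verify that $\Phi$ is a contraction on $\mathcal{B}$. For $u, v \in \mathcal{B}$, both have $\mathcal{X}$-norm at most $r_0 + r_1$, so
\begin{equation*}
\|\Phi(u) - \Phi(v)\|_{\mathcal{X}} = \|I^{+}(N(u) - N(v))\|_{\mathcal{X}} \le \|I^{+}\|_{op}\|N(u) - N(v)\|_{\mathcal{Y}} \le \|I^{+}\|_{op} f(r_0+r_1, r_0+r_1)\|u - v\|_{\mathcal{X}},
\end{equation*}
using Assumption \ref{firstAssumption} and monotonicity of $f$ in both arguments. By the second quantitative hypothesis, $\|I^{+}\|_{op} f(r_0+r_1,r_0+r_1) < 1$, so $\Phi$ is a contraction with this constant. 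Since $\mathcal{B}$ is a closed subset of the Banach space $\mathcal{X}$, it is itself a complete metric space, and the Banach fixed point theorem yields a unique fixed point $u \in \mathcal{B}$, which is the asserted solution.

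The argument is essentially routine once the hypotheses are lined up; the only place requiring care is bookkeeping with the function $f$ — specifically using monotonicity in each argument separately to replace $\|u\|_{\mathcal{X}}$ and $\|v\|_{\mathcal{X}}$ by the uniform bound $r_0 + r_1$, and making sure the self-mapping estimate uses $f(\cdot, 0)$ (the Lipschitz-to-$N(0)$ comparison) while the contraction estimate uses $f(r_0+r_1, r_0+r_1)$. I would also remark that nothing beyond completeness of $\mathcal{X}$ and boundedness of $I^{+}$ is used, and that continuity of $f$ is not strictly needed for this existence statement (it is presumably invoked elsewhere, e.g. to choose $r_0, r_1$ for given data), so I would not belabor it here.
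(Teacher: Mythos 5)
Your proof is correct and follows essentially the same route as the paper: a contraction mapping argument on the closed ball of radius $r_{1}$ centered at $Su_{0}$, using the decomposition $\|N(u)\|_{\mathcal{Y}}\leq f(\|u\|_{\mathcal{X}},0)\|u\|_{\mathcal{X}}+\|N(0)\|_{\mathcal{Y}}$ for the self-mapping step and the Lipschitz bound with monotonicity of $f$ for the contraction step. Your closing remarks about which hypotheses are actually used are accurate and match the paper's structure, where continuity of $f$ is only invoked later in Corollary \ref{DRCorollary}.
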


\begin{proof}  Fix $u_{0}\in X$ such that $\|Su_{0}\|_{\mathcal{X}}\leq r_{0}.$
We let $\mathcal{X}_{B}$ be the closed ball in $\mathcal{X}$ centered at $Su_{0},$ with radius $r_{1}.$
We let the mapping $\mathcal{T}$ be given by $\mathcal{T}(u)=Su_{0}+I^{+}(N(u)).$
We will show that $\mathcal{T}$ is a contraction on $\mathcal{X}_{B}.$  Let $u\in\mathcal{X}_{B}.$
Clearly $\mathcal{T}(u)\in\mathcal{X};$ we first demonstrate that $\mathcal{T}(u)\in\mathcal{X}_{B}.$

We use the fact that $I^{+}$ is a bounded linear operator,
\begin{equation}\label{startingXToX}
\|\mathcal{T}(u)-Su_{0}\|_{\mathcal{X}}=\|I^{+}N(u)\|_{\mathcal{X}}
\leq \|I^{+}\|_{op}\|N(u)\|_{\mathcal{Y}}.
\end{equation}
Next we use the Lipschitz assumption on $N$ and the triangle inequality to say
\begin{equation*}
\|N(u)\|_{\mathcal{Y}}\leq f(\|u\|_{\mathcal{X}},0)\|u\|_{\mathcal{X}}+\|N(0)\|_{\mathcal{Y}}.
\end{equation*}
Using this with \eqref{startingXToX}, we have
\begin{equation*}
\|\mathcal{T}(u)-Su_{0}\|_{\mathcal{X}}\leq \|I^{+}\|_{op}(f(\|u\|_{\mathcal{X}},0)\|u\|_{\mathcal{X}}+\|N(0)\|_{\mathcal{Y}}).
\end{equation*}
For any $u\in\mathcal{X}_{B},$ we have $\|u\|_{\mathcal{X}}\leq r_{0}+r_{1}.$
Since $f$ is monotone in its first argument by assumption, we then find
\begin{equation*}
\|\mathcal{T}(u)-Su_{0}\|_{\mathcal{X}}\leq \|I^{+}\|_{op}(f(r_{0}+r_{1},0)(r_{0}+r_{1})+\|N(0)\|_{\mathcal{Y}}).
\end{equation*}
Using the assumption on $f(r_{0}+r_{1},0)(r_{0}+r_{1}),$ this reduces to
\begin{equation*}
\|\mathcal{T}(u)-Su_{0}\|_{\mathcal{X}}\leq r_{1},
\end{equation*}
which means that we have concluded that $\mathcal{T}$ maps $\mathcal{X}_{B}$ to itself.

We next demonstrate the contracting property.  For $u$ and $v$ in $\mathcal{X}_{B},$ we have
\begin{equation*}
\|\mathcal{T}(u)-\mathcal{T}(v)\|_{\mathcal{X}}=\|I^{+}(N(u)-N(v))\|_{\mathcal{X}}\leq \|I^{+}\|_{op}\|N(u)-N(v)\|_{\mathcal{Y}}.
\end{equation*}
We then use the Lipschitz assumption on $N$ and the monotonicity of $f,$ arriving at
\begin{equation*}
\|\mathcal{T}(u)-\mathcal{T}(v)\|_{\mathcal{X}}\leq\|I^{+}\|_{op}f(r_{0}+r_{1},r_{0}+r_{1})\|u-v\|_{\mathcal{X}}.
\end{equation*}
Since we have assumed that $\|I^{+}\|_{op}f(r_{0}+r_{1},r_{0}+r_{1})<1,$ this is the desired contracting property.

The contraction mapping theorem applies, and the proof is complete.
\end{proof}

The lemma clarifies somewhat when $N(0)=0,$ and that is the subject of the following corollary.
\begin{corollary}\label{DRCorollary}
Assume $N(0)=0,$ and assume Assumption \ref{firstAssumption}.
There exists $r_{0}>0$ such that for all $u_{0}\in X$ satisfying
$\|Su_{0}\|_{\mathcal{X}}\leq r_{0},$  there exists a solution $u\in\mathcal{X}$
of
\begin{equation*}
u=Su_{0}+I^{+}(N(u)).
\end{equation*}
\end{corollary}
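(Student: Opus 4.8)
The plan is to deduce Corollary \ref{DRCorollary} directly from Lemma \ref{firstAbstractTheorem} by checking that the hypotheses of the lemma can be met once $N(0)=0.$ With $N(0)=0$ we have $\|N(0)\|_{\mathcal{Y}}=0,$ so the condition $r_{1}>\|I^{+}\|_{op}\|N(0)\|_{\mathcal{Y}}$ becomes simply $r_{1}>0,$ which is satisfied by any positive $r_{1};$ likewise the inequality $f(r_{0}+r_{1},0)(r_{0}+r_{1})\le \frac{r_{1}}{\|I^{+}\|_{op}}-\|N(0)\|_{\mathcal{Y}}$ reduces to $f(r_{0}+r_{1},0)(r_{0}+r_{1})\le \frac{r_{1}}{\|I^{+}\|_{op}}.$ So the whole task is to choose positive numbers $r_{0},r_{1}$ so that
\begin{equation*}
f(r_{0}+r_{1},0)(r_{0}+r_{1})\le \frac{r_{1}}{\|I^{+}\|_{op}} \quad\text{and}\quad \|I^{+}\|_{op}f(r_{0}+r_{1},r_{0}+r_{1})<1.
\end{equation*}

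First I would fix a convenient relation between the radii, for instance take $r_{1}=r_{0},$ and write $\rho=r_{0}+r_{1}=2r_{0},$ so both conditions become statements about the single small parameter $\rho>0.$ The second condition is then $\|I^{+}\|_{op}f(\rho,\rho)<1,$ and the first is $f(\rho,0)\,\rho\le \frac{\rho/2}{\|I^{+}\|_{op}},$ i.e. $\|I^{+}\|_{op}f(\rho,0)\le \tfrac12,$ after cancelling $\rho>0.$ By Assumption \ref{firstAssumption}, $f$ is continuous and $f(0,0)=0,$ and $f$ is monotone in each argument; hence $f(\rho,0)\to 0$ and $f(\rho,\rho)\to 0$ as $\rho\to 0^{+}.$ Therefore there is $\rho_{*}>0$ such that for all $0<\rho\le\rho_{*}$ both $\|I^{+}\|_{op}f(\rho,0)\le\tfrac12$ and $\|I^{+}\|_{op}f(\rho,\rho)<1$ hold. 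Choosing any such $\rho$ and setting $r_{0}=r_{1}=\rho/2$ verifies every hypothesis of Lemma \ref{firstAbstractTheorem}, and the conclusion of that lemma then gives, for every $u_{0}\in X$ with $\|Su_{0}\|_{\mathcal{X}}\le r_{0},$ a solution $u\in\mathcal{X}$ of $u=Su_{0}+I^{+}(N(u)).$

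There is essentially no obstacle here: the only thing to be careful about is the limiting behavior of $f$ near the origin, which is guaranteed by the continuity of $f$ together with $f(0,0)=0$ (continuity of $f$ at $(0,0)$ alone already forces $f(\rho,0)\to0$ and $f(\rho,\rho)\to0$; the monotonicity in each argument is not even strictly needed for this reduction, though it is consistent with it). One should also note that $r_{0}$ produced this way depends only on $f$ and $\|I^{+}\|_{op},$ not on $u_{0},$ which is exactly the uniformity claimed in the corollary. Finally, observe that when $N(0)=0$ the point $Su_{0}$ with $\|Su_{0}\|_{\mathcal{X}}\le r_{0}$ lies in the ball $\mathcal{X}_{B},$ so the fixed point $u$ satisfies $\|u\|_{\mathcal{X}}\le r_{0}+r_{1}=\rho,$ which records that the solution is small, a fact that will be used when this abstract result is applied to the concrete equations later in the paper.
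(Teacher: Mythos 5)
Your proposal is correct and follows essentially the same route as the paper's proof: set $r_{0}=r_{1}$, reduce the hypotheses of Lemma \ref{firstAbstractTheorem} to the two conditions $\|I^{+}\|_{op}f(2r_{0},0)\leq\tfrac12$ and $\|I^{+}\|_{op}f(2r_{0},2r_{0})<1$, and invoke continuity of $f$ together with $f(0,0)=0$ to choose $r_{0}$ small enough. The additional observations (uniformity of $r_{0}$ in $u_{0}$, smallness of the resulting fixed point) are accurate but not needed for the statement.
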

\begin{proof} To apply Lemma \ref{firstAbstractTheorem}, we need to show that there exists $r_{0}>0$ and $r_{1}>0$ such
that two conditions hold,
\begin{equation*}
f(r_{0}+r_{1},0)(r_{0}+r_{1})\leq \frac{r_{1}}{\|I^{+}\|_{op}},\qquad
\mathrm{and}
\qquad
\|I^{+}\|_{op}f(r_{0}+r_{1},r_{0}+r_{1})<1.
\end{equation*}
We take $r_{0}=r_{1},$ and then we must find an $r_{0}$ such that the two conditions
\begin{equation*}
f(2r_{0},0)\leq \frac{1}{2\|I^{+}\|_{op}},\qquad
\mathrm{and}
\qquad
\|I^{+}\|_{op}f(2r_{0},2r_{0})<1,
\end{equation*}
hold.  Since $f$ is continuous and $f(0,0)=0,$ it is possible to find a sufficiently small $r_{0}$ such that these conditions
are satisfied.  The proof is complete.
\end{proof}

\subsection{Second fixed point result}

%

The authors have used the following abstract result in the papers \cite{ALN5}, \cite{ALN4}, and \cite{ALN6}.
\begin{lemma} \label{fixedpoint}
Let ($X,$ $\vertiii{\cdot}_{X}$) be a Banach space. Assume that $\mathcal{B}:X \times X \to X$ is a continuous bilinear operator and let $\eta>0$ satisfy $\eta\geq \|\mathcal{B}\|_{X\times X\rightarrow X}$. Then, for any $x_0 \in X$ such that
\[4\eta \vertiii{x_0}_{X}<1,\]
there exists one and only one solution to the equation
\[x=x_0+\mathcal{B}(x,x) \qquad \text{ with } \vertiii{x}_{X} < \frac{1}{2\eta}.\]
Moreover, $\vertiii{x}_{X} \leq 2\vertiii{x_0}_{X}$.
\end{lemma}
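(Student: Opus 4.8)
The plan is to set up a standard Picard/contraction-mapping iteration for the quadratic fixed-point equation $x = x_0 + \mathcal{B}(x,x)$. Define the map $\mathcal{T}\colon X \to X$ by $\mathcal{T}(y) = x_0 + \mathcal{B}(y,y)$, and work on the closed ball $\overline{B}_R = \{y \in X : \vertiii{y}_X \leq R\}$ with $R = 2\vertiii{x_0}_X$. The two things to verify are that $\mathcal{T}$ maps $\overline{B}_R$ into itself and that $\mathcal{T}$ is a strict contraction there; then the Banach fixed point theorem yields a unique fixed point in $\overline{B}_R$, and a short separate argument upgrades uniqueness to the whole ball $\vertiii{x}_X < \frac{1}{2\eta}$.

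For the self-mapping property, I would estimate
\[
\vertiii{\mathcal{T}(y)}_X \leq \vertiii{x_0}_X + \|\mathcal{B}\|_{X\times X\to X}\vertiii{y}_X^2 \leq \vertiii{x_0}_X + \eta R^2 = \vertiii{x_0}_X + 4\eta\vertiii{x_0}_X^2,
\]
and observe that the hypothesis $4\eta\vertiii{x_0}_X < 1$ makes this at most $\vertiii{x_0}_X + \vertiii{x_0}_X = R$. For the contraction property, I would use bilinearity to write
\[
\mathcal{B}(y,y) - \mathcal{B}(z,z) = \mathcal{B}(y - z, y) + \mathcal{B}(z, y - z),
\]
so that for $y,z \in \overline{B}_R$,
\[
\vertiii{\mathcal{T}(y) - \mathcal{T}(z)}_X \leq \eta\big(\vertiii{y}_X + \vertiii{z}_X\big)\vertiii{y - z}_X \leq 2\eta R \,\vertiii{y - z}_X = 4\eta\vertiii{x_0}_X \vertiii{y - z}_X,
\]
and the contraction constant $4\eta\vertiii{x_0}_X$ is strictly less than $1$ by hypothesis. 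The bound $\vertiii{x}_X \leq 2\vertiii{x_0}_X$ then comes for free since the fixed point lies in $\overline{B}_R$.

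For the uniqueness claim as stated — uniqueness among all solutions with $\vertiii{x}_X < \frac{1}{2\eta}$, not just those in $\overline{B}_R$ — I would argue directly: if $x$ and $\tilde{x}$ both solve the equation with norm $<\frac{1}{2\eta}$, then subtracting gives $x - \tilde{x} = \mathcal{B}(x - \tilde{x}, x) + \mathcal{B}(\tilde{x}, x - \tilde{x})$, hence
\[
\vertiii{x - \tilde{x}}_X \leq \eta\big(\vertiii{x}_X + \vertiii{\tilde{x}}_X\big)\vertiii{x - \tilde{x}}_X < \eta \cdot \tfrac{1}{\eta}\vertiii{x - \tilde{x}}_X = \vertiii{x - \tilde{x}}_X,
\]
which forces $\vertiii{x - \tilde{x}}_X = 0$. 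Note one must be slightly careful that the strict inequalities combine correctly: if $\vertiii{x - \tilde{x}}_X > 0$ we get a strict contradiction, so the difference vanishes.

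There is no real obstacle here; the statement is a classical quantitative form of the Banach fixed point theorem for quadratic equations, and the only mild subtlety is bookkeeping the strict versus non-strict inequalities so that the self-map lands in the closed ball $\overline{B}_R$ (using $4\eta\vertiii{x_0}_X < 1$ with room to spare) while still getting a genuine contraction and the sharp uniqueness radius $\frac{1}{2\eta}$. I would also remark that the case $x_0 = 0$ is trivial ($x = 0$), so one may tacitly assume $\vertiii{x_0}_X > 0$ if it streamlines the exposition, though it is not necessary.
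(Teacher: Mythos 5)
Your proof is correct and complete: the self-map and contraction estimates on the closed ball of radius $2\vertiii{x_0}_{X}$, together with the separate uniqueness argument on the open ball of radius $\frac{1}{2\eta}$, are exactly the standard argument. The paper does not prove this lemma at all but cites \cite[Lemma 1.2.6]{Cannone1995}, where essentially this same contraction-mapping proof is given, so nothing further is needed.
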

We do not provide proof here, but we instead refer the reader to \cite[p. 37, Lemma 1.2.6]{Cannone1995} and \cite{AuscherTchamitchian1999}, \cite{Cannone2003}.

\section{The 1D generalized Kuramoto-Sivashinsky equation}\label{ottoSection}


As discussed in the introduction, we will now show that small solutions of \eqref{ottoFamily} starting from Wiener
data instantly gain analyticity.  We consider this to be rough data, as functions in the Wiener algebra are continuous
but not generally better than continuous.
We treat the parameter range $a\geq1,$ $0<b<a.$
We show that in some cases (no linearly growing modes), the solution is global and the
radius of analyticity grows without bound.  In the general case (in which linearly growing modes are present),
the result we present here is only applies for a short time,
but still over the relevant time interval solutions are analytic after the initial time.  Note that the Papageorgiou et al. \cite{papageorgiouEtAl}
results only considered analyticity over long time intervals, so only the case of no linearly growing modes is directly
relevant to their results.

We consider the equation
\begin{equation}\label{mainEquation}
u_{t}=u^{m}u_{x}-\nu\Lambda^{b} u +\Lambda^{a}u,
\end{equation}
where the operator $\Lambda$ can be represented either as $\Lambda=\sqrt{-\partial_{x}^{2}}=H\partial_{x}$ or through its Fourier symbol as
$\hat{\Lambda}(k)=|k|.$  We consider the spatially periodic case, so $x\in\mathbb{T},$ and $H$ is the periodic Hilbert transform.
The initial data is
\begin{equation}\label{data}
u(\cdot,0)=u_{0}.
\end{equation}
Since the evolution \eqref{mainEquation} conserves the mean of solutions, we consider zero-mean solutions (so we assume
that the mean of $u_{0}$ is equal to zero).
We write the Duhamel formula for \eqref{mainEquation},
\begin{equation*}
u=e^{(-\nu\Lambda^{b}+\Lambda^{a})t}u_{0}
+\frac{1}{m+1}\int_{0}^{t}e^{(-\nu\Lambda^{b}+\Lambda^{a})(t-s)}\partial_{x}((u(\cdot,s))^{m+1})\ \mathrm{d}s.
\end{equation*}

\subsection{Global theorem}  We work in two cases: first, we demonstrate global existence of solutions when $\nu>1;$
this is the case that there are no linearly growing Fourier modes.  Our result will include that the radius of analyticity of
solutions grows linearly in time, for all time.  Solutions may exist for all time even in the complementary case $\nu\leq1,$ but
the radius of analyticity of solutions would not be expected to grow without bound in that case.  For the remainder of this
subsection, we fix $\nu>1.$

To apply the abstract result of Section \ref{firstFixedPointSection},
we write this as $u=Su_{0}+I^{+}(N(u)),$ where $S,$ $N,$ and $I^{+}$
are defined as
\begin{equation*}
Su_{0}=e^{(-\nu\Lambda^{b}+\Lambda^{a})t}u_{0}, \qquad N(u)=\frac{1}{m+1}u^{m+1},
\end{equation*}
\begin{equation*}
(I^{+}f)(\cdot,t)=\int_{0}^{t}e^{(-\nu\Lambda^{b}+\Lambda^{a})(t-s)}\partial_{x}f(\cdot,s)\ \mathrm{d}s.
\end{equation*}
Note that we clearly have $N(0)=0,$ and thus we wish to apply Corollary \ref{DRCorollary}.
To do so, we must do the following:
\begin{itemize}
\item Identify the spaces $X,$ $\mathcal{X},$ and $\mathcal{Y},$
\item Show that the linear operator $S$ maps $X$ to $\mathcal{X},$
\item Show that the linear operator $I^{+}$ maps $\mathcal{Y}$ to $\mathcal{X}$ and is bounded,
\item And, show that there exists a continuous, monotone $f$ such that $f(0,0)=0$ and
\begin{equation*}
\|N(u)-N(v)\|_{\mathcal{Y}}\leq f(\|u\|_{\mathcal{X}},\|v\|_{\mathcal{X}})\|u-v\|_{\mathcal{X}}.
\end{equation*}
\end{itemize}

We have introduced the spaces $B_{0}$ and $\mathcal{B}_{\alpha}$ in Section \ref{functionSpacesSection} above.
We let $X=B_{0}.$
We let $\alpha\in(0,\nu-1)$ be given, and we take $\mathcal{X}=\mathcal{Y}=\mathcal{B}_{\alpha}.$
We make a couple of remarks on the choice of the space $\mathcal{B}_{\alpha}.$
Since functions in $\mathcal{B}_{\alpha}$ have
domain $\mathbb{T}\times[0,\infty),$ any solution we prove to exist in $\mathcal{B}_{\alpha}$ is automatically a global
solution.
Functions in $\mathcal{B}_{\alpha}$ have Fourier series which decay exponentially at all $t>0,$ with exponential decay rate
$\alpha t.$  This implies that at each time $t>0,$ the functions are spatially analytic, with radius of analyticity at least
$\alpha t.$  Thus by using $\mathcal{B}_{\alpha},$ we will demonstrate global existence and analyticity simultaneously.


We now consider boundedness of the semigroup.
Specifically, we take $u_{0}\in B_{0}$ (with mean zero)
and we demonstrate that $e^{(-\nu\Lambda^{b}+\Lambda^{a})t}u_{0}$ is in
$\mathcal{B}_{\alpha}.$
We will rely upon our previously stated conditions $\nu>1$ and $\alpha\in(0,\nu-1),$ as well as $b\geq1$ and $0<a<b.$
We compute the norm:
\begin{multline}\nonumber
\|e^{(-\nu\Lambda^{b}+\Lambda^{a})t}u_{0}\|_{\mathcal{B}_{\alpha}}
=\sum_{k\in\mathbb{Z}_{*}}\sup_{t\in[0,\infty)}e^{\alpha t|k|}|e^{-\nu t|k|^{b}+t|k|^{a}}\hat{u}_{0}(k)|
\\
=\sum_{k\in\mathbb{Z}_{*}}\sup_{t\in[0,\infty)}
e^{\alpha t(|k|-|k|^{b})}e^{(\alpha-\nu)t|k|^{b}+t|k|^{a}}|\hat{u}_{0}(k)|.
\end{multline}
The first exponent on the right-hand side is non-positive since $b\geq1.$
The second exponent is also non-positive since $\alpha\in(0,\nu-1);$ this second exponent
may be written $t|k|^{a}((\alpha-\nu)|k|^{b-a}+1).$
Therefore the supremum is attained at $t=0$ for every $k\in\mathbb{Z}_{*}.$  We therefore have
\begin{equation*}
\|e^{(-\nu\Lambda^{b}+\Lambda^{a})t}u_{0}\|_{\mathcal{B}_{\alpha}}
=\sum_{k\in\mathbb{Z}_{*}}|\hat{u}_{0}(k)|=\|u_{0}\|_{B_{0}}.
\end{equation*}

%

We next demonstrate the boundedness of the linear operator $I^{+}.$
We begin by writing an expression for the norm:
\begin{equation*}
\|I^{+}f\|_{\mathcal{B}_{\alpha}}
=\sum_{k\in\mathbb{Z}}\sup_{t\in[0,t)}e^{\alpha t|k|}\left|\int_{0}^{t}e^{(-\nu |k|^{b}+|k|^{a})(t-s)}ik\hat{f}(k,s)|\ \mathrm{d}s\right|.
\end{equation*}
We note that because of the factor of $ik$ present on the right-hand side, we may exclude the $k=0$ mode from the summation.
We also move the absolute value inside the integral:
\begin{equation*}
\|I^{+}f\|_{\mathcal{B}_{\alpha}}\leq\sum_{k\in\mathbb{Z}_{*}}\sup_{t\in[0,\infty)}e^{\alpha t|k|}\int_{0}^{t}
e^{(-\nu |k|^{b}+|k|^{a})(t-s)}|k||\hat{f}(k,s)|\ \mathrm{d}s.
\end{equation*}
We rearrange factors of exponentials, and we also take another supremum inside the integral:
\begin{multline}\nonumber
\|I^{+}f\|_{\mathcal{B}_{\alpha}}
\\
\leq\sum_{k\in\mathbb{Z}_{*}}\sup_{t\in[0,\infty)}|k|
e^{(\alpha|k|-\nu|k|^{b}+|k|^{a})t}\int_{0}^{t}e^{(-\alpha|k|+\nu|k|^{b}-|k|^{a})s}
\left(e^{\alpha s|k|}|\hat{f}(k,s)|\right)\ \mathrm{d}s
\\
\leq \sum_{k\in\mathbb{Z}_{*}}\sup_{t\in[0,\infty)}|k|
e^{(\alpha|k|-\nu|k|^{b}+|k|^{a})t}\int_{0}^{t}e^{(-\alpha|k|+\nu|k|^{b}-|k|^{a})s}
\left(\sup_{\tau\in[0,\infty)}e^{\alpha\tau |k|}|\hat{f}(k,\tau)|\right)\ \mathrm{d}s.
\end{multline}
This supremum with respect to $\tau$ may now be pulled through the integral and through the supremum with respect to $t.$
This yields
\begin{multline}\nonumber
\|I^{+}f\|_{\mathcal{B}_{\alpha}}\leq\sum_{k\in\mathbb{Z}_{*}}\left(\sup_{t\in[0,\infty)}e^{\alpha t|k|}|\hat{f}(k,t)|\right)\cdot
\\
\cdot\left(\sup_{t\in[0,\infty)}|k|
e^{(\alpha|k|-\nu|k|^{b}+|k|^{a})t}\int_{0}^{t}e^{(-\alpha|k|+\nu|k|^{b}-|k|^{a})s}\ \mathrm{d}s\right).
\end{multline}
We introduce another supremum on the right-hand side, yielding
\begin{multline}\nonumber
\|I^{+}f\|_{\mathcal{B}_{\alpha}}\leq\left(\sum_{k\in\mathbb{Z}_{*}}\sup_{t\in[0,\infty)}e^{\alpha t|k|}|\hat{f}(k,t)|\right)\cdot
\\
\cdot\left(\sup_{k\in\mathbb{Z}_{*}}\sup_{t\in[0,\infty)}|k|
e^{(\alpha|k|-\nu|k|^{b}+|k|^{a})t}\int_{0}^{t}e^{(-\alpha|k|+\nu|k|^{b}-|k|^{a})s}
\ \mathrm{d}s\right)
\\
\leq \|f\|_{\mathcal{B}_{\alpha}}
\left(\sup_{k\in\mathbb{Z}_{*}}\sup_{t\in[0,\infty)}|k|
e^{(\alpha|k|-\nu|k|^{b}+|k|^{a})t}\int_{0}^{t}e^{(-\alpha|k|+\nu|k|^{b}-|k|^{a})s}
\ \mathrm{d}s\right).
\end{multline}
Next, we compute the integral on the right-hand side, finding
\begin{equation}\label{cancellation}
\|I^{+}f\|_{\mathcal{B}_{\alpha}}\leq\|f\|_{\mathcal{B}_{\alpha}}\left(\sup_{k\in\mathbb{Z}_{*}}\sup_{t\in[0,\infty)}|k|
e^{(\alpha|k|-\nu|k|^{b}+|k|^{a})t}
\cdot
\frac{e^{(-\alpha|k|+\nu|k|^{b}-|k|^{a})t}-1}{-\alpha|k|+\nu|k|^{b}-|k|^{a}}\right).
\end{equation}
This simplifies as
\begin{equation*}
\|I^{+}f\|_{\mathcal{B}_{\alpha}}\leq\|f\|_{\mathcal{B}_{\alpha}}\left(\sup_{k\in\mathbb{Z}_{*}}\sup_{t\in[0,\infty)}
\frac{1-e^{(\alpha|k|-\nu|k|^{b}+|k|^{a})t}}{\nu|k|^{b-1}-|k|^{a-1}-\alpha}\right)\leq\frac{\|f\|_{\mathcal{B}_{\alpha}}}{\nu-1-\alpha}.
\end{equation*}
Thus, the operator norm of $I^{+}$ (as a map from $\mathcal{B}_{\alpha}$ to itself) is at most $1/(\nu-1-\alpha).$

Finally, we must demonstrate the Lipschitz property for the nonlinearity, $N.$  This is straightforward, though, because
of the algebra property of the space $\mathcal{B}_{\alpha}.$  Specifically, we have
\begin{equation*}
\|N(u)-N(v)\|_{\mathcal{B}_{\alpha}}=\frac{1}{m+1}\|u^{m+1}-v^{m+1}\|_{\mathcal{B}_{\alpha}}
\leq f(\|u\|_{\mathcal{B}_{\alpha}},\|v\|_{\mathcal{B}_{\alpha}})\|u-v\|_{\mathcal{B}_{\alpha}},
\end{equation*}
where $f$ is given by
\begin{equation*}
f(w_{1},w_{2})=\frac{c^{m}}{m+1}\sum_{j=0}^{m}w_{1}^{j}w_{2}^{m-j}.
\end{equation*}
Here, we have made $m$ applications of \eqref{alphaAlgebra}, which explains the appearance of the constant $c^{m}.$
This $f$ clearly satisfies the required properties.
We conclude that Corollary \ref{DRCorollary} applies, and we have the following result.

We have proved the following theorem.
\begin{theorem}\label{refereeTheorem}
Let $b\geq1,$ $0<a<b,$ $\nu>1$ and $\alpha\in(0,\nu-1)$ be given.    There exists $\varepsilon>0$ such that for all $u_{0}\in B_{0}$ such that
$\|u_{0}\|_{B_{0}}<\varepsilon,$
there exists a unique $u\in\mathcal{B}_{\alpha}$ such
that $u$ is a global
mild solution of the initial value problem \eqref{mainEquation}, \eqref{data}.  This solution is analytic for all
$t>0,$ with radius of analyticity at least $\alpha t.$
\end{theorem}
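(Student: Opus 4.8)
The plan is to recast the Duhamel formula as the fixed-point equation $u = Su_{0} + I^{+}(N(u))$ and to verify the hypotheses of Corollary \ref{DRCorollary} with $X = B_{0}$ and $\mathcal{X} = \mathcal{Y} = \mathcal{B}_{\alpha}$, where $\alpha \in (0,\nu-1)$ is the given parameter, $Su_{0} = e^{(-\nu\Lambda^{b}+\Lambda^{a})t}u_{0}$, $N(u) = \frac{1}{m+1}u^{m+1}$, and $(I^{+}f)(\cdot,t) = \int_{0}^{t} e^{(-\nu\Lambda^{b}+\Lambda^{a})(t-s)}\partial_{x} f(\cdot,s)\,\mathrm{d}s$. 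Since $N(0) = 0$, Corollary \ref{DRCorollary} produces a threshold $r_{0} > 0$ such that a mild solution $u \in \mathcal{B}_{\alpha}$ exists (and is unique in the relevant ball, by the contraction mapping theorem underlying the corollary) whenever $\|Su_{0}\|_{\mathcal{B}_{\alpha}} \leq r_{0}$. The semigroup bound below gives $\|Su_{0}\|_{\mathcal{B}_{\alpha}} \leq \|u_{0}\|_{B_{0}}$, so taking $\varepsilon = r_{0}$ yields the theorem. Globality is automatic since elements of $\mathcal{B}_{\alpha}$ are defined on $\mathbb{T}\times[0,\infty)$; analyticity follows since $\|u\|_{\mathcal{B}_{\alpha}} < \infty$ forces $|\hat{u}(k,t)| \leq e^{-\alpha t|k|}\|u\|_{\mathcal{B}_{\alpha}}$, i.e. at each $t > 0$ the Fourier series decays exponentially with rate $\alpha t$, giving radius of analyticity at least $\alpha t$.

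The first step is to show $S : B_{0} \to \mathcal{B}_{\alpha}$ with the stated norm. Writing $e^{\alpha t|k|} e^{-\nu t|k|^{b}+t|k|^{a}} = e^{\alpha t(|k|-|k|^{b})}\, e^{t|k|^{a}((\alpha-\nu)|k|^{b-a}+1)}$ and using $b \geq 1$ (so $|k| \leq |k|^{b}$ for $k \in \mathbb{Z}_{*}$), together with $\alpha < \nu - 1$ (so $\alpha - \nu < -1$) and $b > a$ (so $|k|^{b-a} \geq 1$), both exponents are nonpositive; hence the supremum over $t$ is attained at $t = 0$ and summing over $k$ gives precisely $\|u_{0}\|_{B_{0}}$.

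The central computation is the boundedness of $I^{+}$ on $\mathcal{B}_{\alpha}$. Moving the absolute value inside the time integral (the $k = 0$ mode drops out by the $\partial_{x}$), I would rearrange exponential factors so that $\sup_{\tau \geq 0} e^{\alpha\tau|k|}|\hat{f}(k,\tau)|$ can be pulled through the $s$-integral and the $t$-supremum, and then pull one further supremum over $k$ out of the $k$-sum. This reduces the operator norm to a bound on $\sup_{k\in\mathbb{Z}_{*}}\sup_{t\geq 0} |k|\, e^{(\alpha|k|-\nu|k|^{b}+|k|^{a})t}\int_{0}^{t} e^{(-\alpha|k|+\nu|k|^{b}-|k|^{a})s}\,\mathrm{d}s$. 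Computing the elementary $s$-integral and cancelling gives $\sup_{k,t}\frac{1 - e^{(\alpha|k|-\nu|k|^{b}+|k|^{a})t}}{\nu|k|^{b-1}-|k|^{a-1}-\alpha}$; since $a - 1 < b - 1$ gives $|k|^{a-1} \leq |k|^{b-1}$ for $k \in \mathbb{Z}_{*}$, the denominator is at least $(\nu-1)|k|^{b-1}-\alpha \geq \nu-1-\alpha > 0$, so $\|I^{+}\|_{op} \leq (\nu-1-\alpha)^{-1}$.

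Finally, the Lipschitz bound for $N$ follows from the algebra property of $\mathcal{B}_{\alpha}$: from $u^{m+1} - v^{m+1} = (u-v)\sum_{j=0}^{m} u^{j}v^{m-j}$ and $m$ applications of \eqref{alphaAlgebra}, one obtains Assumption \ref{firstAssumption} with $f(w_{1},w_{2}) = \frac{c^{m}}{m+1}\sum_{j=0}^{m} w_{1}^{j}w_{2}^{m-j}$, which is continuous, monotone in each argument, and vanishes at the origin. With all four ingredients in place, Corollary \ref{DRCorollary} applies and the theorem follows. I expect the $I^{+}$ estimate to be the only genuine obstacle: one must organize the nested suprema so that the time weight of $f$ decouples cleanly from the semigroup kernel, and then absorb the derivative loss $|k|$ into the parabolic gain — this is exactly where $b \geq 1$ is essential (so $|k|^{b}$ dominates both $|k|$ and $|k|^{a}$) and where $\alpha < \nu - 1$ is needed to keep the denominator bounded away from $0$ uniformly in $k$. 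The semigroup bound and the Lipschitz bound are routine.
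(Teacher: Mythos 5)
Your proposal is correct and follows essentially the same route as the paper: the same choice $X=B_{0}$, $\mathcal{X}=\mathcal{Y}=\mathcal{B}_{\alpha}$, the same splitting of the semigroup exponent to show the supremum is attained at $t=0$, the same reduction of $\|I^{+}\|_{op}$ to $\sup_{k,t}\bigl(1-e^{(\alpha|k|-\nu|k|^{b}+|k|^{a})t}\bigr)/\bigl(\nu|k|^{b-1}-|k|^{a-1}-\alpha\bigr)\leq(\nu-1-\alpha)^{-1}$, and the same Lipschitz function $f$ from the algebra property, all feeding into Corollary \ref{DRCorollary}. No substantive differences to report.
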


\subsection{Local theorem}

The previous theorem showed that for $\nu>1,$ solutions to the initial value problem for \eqref{mainEquation}
exists for $t\in[0,\infty),$ as long as the data is sufficiently small.  We now
let $\nu>0$ be given, and $T>0.$  We show that a solution to the initial value problem for \eqref{mainEquation}
exists on $[0,T]$ for sufficiently small data.  The size of the data will go to zero as $\nu\rightarrow0$ or as
$T\rightarrow\infty.$  We again will use Corollary \ref{DRCorollary}, but now with the function spaces
$\mathcal{B}_{\alpha,T}$ rather than the previous $\mathcal{B}_{\alpha}.$
We only require $\alpha>0$ (unless $b=1,$ in which case we also require $\alpha<\nu$); let such an $\alpha$ be given.

We must repeat the steps that the semigroup maps $B_{0}$ to $\mathcal{B}_{\alpha,T},$ and that $I^{+}$ is a bounded
linear operator from $\mathcal{B}_{\alpha,T}$ to $\mathcal{B}_{\alpha,T}.$
We let $M_{1}>0$ be defined by
\begin{equation*}
M_{1}=\sup_{k\in\mathbb{Z}_{*}}\sup_{t\in[0,T]}e^{(\alpha|k|-\nu|k|^{b}+|k|^{a})t}.
\end{equation*}
To see that $M_{1}$ is finite, recall that $b\geq1$ and $b>a$ (and if $b=1$ we also have $\alpha<\nu$)
so there are at most finitely many $k$ for which
the exponent is positive.  Since $T$ is finite, the exponential is bounded overall.
We introduce some related notation, letting $\Omega_{f}$ be the finite set of $k$ for which the exponent is non-negative:
\begin{equation*}
\Omega_{f}=\{k\in\mathbb{Z}_{*}: \alpha|k|-\nu|k|^{b}+|k|^{a}\geq0\},
\end{equation*}
and we let the infinite set $\Omega_{i}$ be $\Omega_{i}=\mathbb{Z}^{3}_{*}\setminus\Omega_{f}.$
We let $M_{2}>0$ be such that for all $k\in\Omega_{f},$ $|k|\leq M_{2}.$
We take $M_{3}>0$ be such that for all $k\in\Omega_{i},$
\begin{equation}\label{M3Definition}
\nu|k|^{b}-\alpha|k|-|k|^{a}>M_{3}|k|^{b}.
\end{equation}

The semigroup may be estimated, then, using the constant $M_{1}:$
\begin{equation*}
\|e^{(-\nu\Lambda^{b}+\Lambda^{a})t}u_{0}\|_{\mathcal{B}_{\alpha,T}}
=\sum_{k\in\mathbb{Z}_{*}}\sup_{t\in[0,T]}e^{(\alpha|k|-\nu|k|^{b}|+|k|^{a})t}|\hat{u}_{0}(k)|
\leq M_{1}\|u_{0}\|_{B_{0}}.
\end{equation*}

We next must estimate the Duhamel integral operator, $I^{+}.$  We begin with the statement of the norm of $I^{+}h,$
for $h\in\mathcal{B}_{\alpha,T}:$
\begin{equation*}
\|I^{+}h\|_{\mathcal{B}_{\alpha,T}}=\sum_{k\in\mathbb{Z}_{*}}\sup_{t\in[0,T]}\left| e^{\alpha|k|t}\int_{0}^{t}
e^{(t-s)(-\nu|k|^{b}+|k|^{a})}ik\hat{h}(k,s)
\ \mathrm{d}s\right|.
\end{equation*}
We apply the triangle inequality to bound this as
\begin{equation*}
\|I^{+}h\|_{\mathcal{B}_{\alpha,T}}\leq\sum_{k\in\mathbb{Z}_{*}}\sup_{t\in[0,T]} e^{\alpha|k|t}\int_{0}^{t}
e^{(t-s)(-\nu|k|^{b}+|k|^{a})}|k||\hat{h}(k,s)|
\ \mathrm{d}s.
\end{equation*}
Using this decomposition into $\Omega_{f}$ and $\Omega_{i},$ we may write the sum on the right-hand side as
\begin{multline}\nonumber
\|I^{+}h\|_{\mathcal{B}_{\alpha,T}}\leq\sum_{k\in\Omega_{f}}\sup_{t\in[0,T]} e^{\alpha|k|t}\int_{0}^{t}
e^{(t-s)(-\nu|k|^{b}+|k|^{a})}|k||\hat{h}(k,s)|\ \mathrm{d}s
\\
+
\sum_{k\in\Omega_{i}}\sup_{t\in[0,T]} e^{\alpha|k|t}\int_{0}^{t}
e^{(t-s)(-\nu|k|^{b}+|k|^{a})}|k||\hat{h}(k,s)|\ \mathrm{d}s = I + II.
\end{multline}

We then bound each of the terms $I$ and $II.$  For $I,$ first rearrange factors of exponentials to have
\begin{equation*}
I=\sum_{k\in\Omega_{f}}\sup_{t\in[0,T]}\int_{0}^{t}e^{(t-s)(\alpha|k|-\nu|k|^{b}+|k|^{a})}e^{\alpha s |k|}|\hat{h}(k,s)|\ \mathrm{d}s.
\end{equation*}
We then estimate this as
\begin{multline}\nonumber
I
\leq \left(\sum_{k\in\mathbb{Z}_{*}}\sup_{s\in[0,T]}e^{\alpha s|k|}|\hat{h}(k,s)|\right)
\cdot
\\
\cdot
\left(\sup_{k\in\Omega_{f}}\sup_{t\in[0,T]}\int_{0}^{t}|k|e^{(t-s)(\alpha|k|-\nu|k|^{b}+|k|^{a})}\ \mathrm{d}s\right)
\leq TM_{1}M_{2}\|h\|_{\mathcal{B}_{\alpha,T}}.
\end{multline}

We next estimate $II.$  We may rearrange factors of exponentials to write $II$ as
\begin{equation*}
II=\sum_{k\in\Omega_{i}}\sup_{t\in[0,T]}e^{t(\alpha|k|-\nu|k|^{b}+|k|^{a})}|k|
\int_{0}^{t}e^{s(-\alpha|k|+\nu|k|^{b}-|k|^{a})}e^{\alpha s|k|}|\hat{h}(k,s)|\ \mathrm{d}s.
\end{equation*}
We then estimate this as
\begin{multline}\nonumber
II\leq \left(\sum_{k\in\mathbb{Z}_{*}}\sup_{s\in[0,T]}e^{\alpha s|k|}|\hat{h}(k,s)|\right)
\cdot
\\
\cdot
\left(\sup_{k\in\Omega_{i}}\sup_{t\in[0,T]}e^{t(\alpha|k|-\nu|k|^{b}+|k|^{a})}|k|
\int_{0}^{t}e^{s(-\alpha|k|+\nu|k|^{b}-|k|^{a})}\ \mathrm{d}s\right).
\end{multline}
We recognize the first factor on the right-hand side as the norm of $h,$ and we compute the integral in the second
factor on the right-hand side.  These considerations yield the following:
\begin{equation*}
II\leq \|h\|_{\mathcal{B}_{\alpha,T}}\sup_{k\in\Omega_{i}}\sup_{t\in[0,T]}e^{t(\alpha|k|-\nu|k|^{b}+|k|^{a})}|k|
\left(\frac{e^{t(-\alpha|k|+\nu|k|^{b}-|k|^{a})}-1}{-\alpha|k|+\nu|k|^{b}-|k|^{a}}\right).
\end{equation*}
Since the denominator on the right-hand side is positive (for $k\in\Omega_{i}$), we may neglect the negative term in the
numerator.  Doing this and simplifying, we have
\begin{equation*}
II\leq \|h\|_{\mathcal{B}_{\alpha,T}}\sup_{k\in\Omega_{i}}\frac{|k|}{-\alpha|k|+\nu|k|^{b}-|k|^{a}}.
\end{equation*}
Finally, using \eqref{M3Definition}, this becomes
\begin{equation*}
II\leq \frac{1}{M_{3}}\|h\|_{\mathcal{B}_{\alpha,T}}.
\end{equation*}
This completes the proof that $I^{+}$ is bounded from $\mathcal{B}_{\alpha,T}$ to itself.  We now may employ
Corollary \ref{DRCorollary}, and we have proved the following theorem.

\begin{theorem}\label{localTheorem}
Let $b\geq1,$ $0<a<b,$ $\nu>0,$ $T>0,$ and $\alpha>0$ be given (with $\alpha<\nu$ if $b=1$).
There exists $\varepsilon>0$ such that if $\|u_{0}\|_{B_{0}}<\varepsilon,$
then there exists a unique $u\in\mathcal{B}_{\alpha,T}$ such that $u$ is a mild solution of
\eqref{mainEquation}, \eqref{data}.  This
solution is analytic for all $t\in(0,T]$ with radius of analyticity at least $\alpha t.$
\end{theorem}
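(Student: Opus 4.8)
The plan is to apply Corollary \ref{DRCorollary} with $X = B_{0}$ and $\mathcal{X} = \mathcal{Y} = \mathcal{B}_{\alpha,T}$, reading off $S$, $N$, $I^{+}$ from the Duhamel formula for \eqref{mainEquation}: $Su_{0} = e^{(-\nu\Lambda^{b}+\Lambda^{a})t}u_{0}$, $N(u) = \frac{1}{m+1}u^{m+1}$, and $(I^{+}f)(\cdot,t) = \int_{0}^{t}e^{(-\nu\Lambda^{b}+\Lambda^{a})(t-s)}\partial_{x}f(\cdot,s)\,\mathrm{d}s$. Since $N(0)=0$, the corollary applies once we verify that $S$ maps $B_{0}$ into $\mathcal{B}_{\alpha,T}$, that $I^{+}$ is a bounded operator on $\mathcal{B}_{\alpha,T}$, and that $N$ satisfies the Lipschitz condition of Assumption \ref{firstAssumption}; the solution is then obtained, for data small in $B_{0}$, as the unique fixed point in a ball about $Su_{0}$ in $\mathcal{B}_{\alpha,T}$.

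First I would record finiteness of $M_{1} = \sup_{k\in\mathbb{Z}_{*}}\sup_{t\in[0,T]}e^{(\alpha|k|-\nu|k|^{b}+|k|^{a})t}$. Because $b\geq1$, $a<b$, and (when $b=1$) $\alpha<\nu$, the exponent $\alpha|k|-\nu|k|^{b}+|k|^{a}$ is negative for all large $|k|$, so it is nonnegative only on a finite set $\Omega_{f}$; since $T<\infty$, $M_{1}<\infty$, and directly from the definition of the norm $\|Su_{0}\|_{\mathcal{B}_{\alpha,T}} \le M_{1}\|u_{0}\|_{B_{0}}$. I would also fix $M_{2}$ with $|k|\le M_{2}$ on $\Omega_{f}$ and $M_{3}>0$ with $\nu|k|^{b}-\alpha|k|-|k|^{a} > M_{3}|k|^{b}$ for $k\in\Omega_{i} := \mathbb{Z}_{*}\setminus\Omega_{f}$.

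The main work is the bound on $I^{+}$. Starting from the definition of $\|I^{+}h\|_{\mathcal{B}_{\alpha,T}}$, one moves the absolute value and a supremum over $s$ inside the time integral and splits the $k$-sum over $\Omega_{f}$ and $\Omega_{i}$. On $\Omega_{f}$, after rearranging exponentials, the time integral is bounded crudely by $T$, and $|k|\le M_{2}$ together with $M_{1}$ gives a contribution $\le TM_{1}M_{2}\|h\|_{\mathcal{B}_{\alpha,T}}$. On $\Omega_{i}$, rearranging exponentials so that the weight $e^{\alpha s|k|}$ sits on $\hat h(k,s)$ leaves the scalar factor $|k|\,e^{t(\alpha|k|-\nu|k|^{b}+|k|^{a})}\int_{0}^{t}e^{s(\nu|k|^{b}-|k|^{a}-\alpha|k|)}\,\mathrm{d}s$; computing the integral, discarding the favorable negative exponential in the numerator, and invoking $\nu|k|^{b}-\alpha|k|-|k|^{a}>M_{3}|k|^{b}$ bounds this by $\frac{1}{M_{3}}\|h\|_{\mathcal{B}_{\alpha,T}}$. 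Hence $\|I^{+}\|_{op}\le TM_{1}M_{2} + M_{3}^{-1}$. I expect this estimate — the bookkeeping of the competing exponential factors and the split $\Omega_{f},\Omega_{i}$ that makes the borderline case $b=1$ go through — to be the main obstacle; the rest is routine.

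Finally, the Lipschitz bound on $N$ follows from the algebra property of $\mathcal{B}_{\alpha,T}$ (the analogue of \eqref{alphaAlgebra}): writing $u^{m+1}-v^{m+1} = (u-v)\sum_{j=0}^{m}u^{j}v^{m-j}$ and applying the algebra estimate $m$ times gives $\|N(u)-N(v)\|_{\mathcal{B}_{\alpha,T}} \le f(\|u\|_{\mathcal{B}_{\alpha,T}},\|v\|_{\mathcal{B}_{\alpha,T}})\|u-v\|_{\mathcal{B}_{\alpha,T}}$ with $f(w_{1},w_{2}) = \frac{c^{m}}{m+1}\sum_{j=0}^{m}w_{1}^{j}w_{2}^{m-j}$, which is continuous, monotone in each argument, and vanishes at the origin. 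Corollary \ref{DRCorollary} then yields $r_{0}>0$ such that $\|Su_{0}\|_{\mathcal{B}_{\alpha,T}}\le r_{0}$ gives a unique solution in the ball of radius $r_{0}$ about $Su_{0}$; taking $\varepsilon = r_{0}/M_{1}$ and using $\|Su_{0}\|_{\mathcal{B}_{\alpha,T}}\le M_{1}\|u_{0}\|_{B_{0}}$ proves the existence and uniqueness claims. Analyticity for $t\in(0,T]$ with radius at least $\alpha t$ is immediate from $u\in\mathcal{B}_{\alpha,T}$, since $|\hat u(k,t)|\le e^{-\alpha t|k|}\|u\|_{\mathcal{B}_{\alpha,T}}$ forces the Fourier series to extend holomorphically to the strip $|\mathrm{Im}\,z| < \alpha t$.
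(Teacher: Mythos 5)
Your proposal is correct and follows essentially the same route as the paper's own argument: the same choice of spaces $\mathcal{B}_{\alpha,T}$, the same constants $M_{1},M_{2},M_{3}$, the same split of the $I^{+}$ estimate over $\Omega_{f}$ and $\Omega_{i}$ yielding the bound $TM_{1}M_{2}+M_{3}^{-1}$, and the same application of Corollary \ref{DRCorollary} with the algebra-property Lipschitz bound for $N$. No substantive differences to report.
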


\begin{remark} In Theorem \ref{localTheorem}, the time horizon $T$ can be taken arbitrarily large, and
the rate of gain of analyticity $\alpha$ may also be taken arbitrarily large (for $b>1$).  However these choices impact
the size of data, $\varepsilon,$ with large values of $T$ and/or $\alpha$ leading to small values of $\varepsilon.$
\end{remark}

\section{The Constantin-Lax-Majda equation with dissipation}\label{CLMSection}

In this section we consider the equation
\begin{equation}\label{CLMd}
\tilde{\omega}_{t}=\tilde{\omega} H(\tilde{\omega}) -\nu\Lambda^{\sigma}\tilde{\omega},
\end{equation}
subject to initial condition
\begin{equation}\label{CLMData}
\tilde{\omega}(\cdot,0)=\tilde{\omega}_{0},
\end{equation}
and periodic boundary conditions.  Here, $H$ is the periodic Hilbert transform.
When $\nu=0,$ this is the Constantin-Lax-Majda equation \cite{CLM}, introduced as a one-dimensional
model of vortex stretching.  It was shown in \cite{CLM} that the equation develops singularities in finite time.
With added dissipation, it has been shown that the long-time behavior depends on the spatial domain and the size
of the initial data.  When $x\in\mathbb{R},$ even small solutions may blow up in finite time
\cite{schochet}, while in the case of periodic boundary conditions sufficiently small initial data
leads to global existence \cite{ALSS}.

There are two global existence results presented in \cite{ALSS}, and they are for a more general version of
\eqref{CLMd} which has an additional advection term.  For sufficiently small data in the Wiener algebra (which in our notation
is $Y^{0}$) and
for $\sigma\geq1,$ it is proved that solutions exist for all time and are analytic at positive times.  With data instead
sufficiently small in $L^{2}$ and $\sigma>1,$ global existence is shown.  The requirements $\sigma\geq1$ are
related to the presence of the advective term which we are not considering in the current section.
We will show now that for any $\sigma>0,$ sufficiently small periodic data in $Y^{-\sigma/2}$ leads to existence
of a global solution.  This demonstrates a significant improvement over the global existence results of
\cite{ALSS}, as the two global existence results there both required decaying Fourier series, while the present
result allows the Fourier coefficients to grow as $k$ goes to infinity.  The results of \cite{ALSS} for Wiener data use the one-norm
approach, i.e. the method using Corollary \ref{DRCorollary}.  In that method, the algebra property of the spaces
$\mathcal{B}_{\alpha}$ is strongly relied upon.  The spaces $Y^{s}$ are algebras for $s\geq0$ but not for negative
values of $s.$  Thefore the extension to spaces with negative index requires a different method.  We thus use the
two-norm approach, i.e. the method related to Lemma \ref{fixedpoint}.
As we have said, the approach using Corollary \ref{DRCorollary} as in \cite{ALSS} automatically gives
analyticity of the solutions.   One would not expect
analyticity unless $\sigma\geq1,$ and in this case, for the solutions we will prove to exist using Lemma \ref{fixedpoint},
we can again show that the solutions gain analyticity at any finite time.  This must be done separately, though, and is not
part of the existence argument when using this method.

We define $\omega=\mathbb{P}_{0}\omega,$ with $\mathbb{P}_{0}$ being the projection which removes the
mean of a periodic function.  Properties of the Hilbert transform imply that for any $f,$
\begin{equation}\label{hilbertProperties}
\mathbb{P}_{0}(H(f))=H(f),\qquad \mathbb{P}_{0}(fH(f))=fH(f).
\end{equation}
Furthermore, the Fourier symbol of $H$ is $\hat{H}(k)=-i\mathrm{sgn}(k).$
Of course, for any $f$ and for any $\sigma>0$ we also have $\mathbb{P}_{0}\Lambda^{\sigma}f=\Lambda^{\sigma}f.$
We see then that $\mathbb{P}_{0}\tilde{\omega}_{t}=0,$ so that the evolution \eqref{CLMd} preserves the mean
of $\tilde{\omega}.$  We denote $\omega=\mathbb{P}_{0}\tilde{\omega},$ and
$\omega_{0}=\mathbb{P}_{0}\tilde{\omega}.$  We let the mean of $\tilde{\omega}_{0}$ be denoted by $\bar{\omega},$
so that $\tilde{\omega}_{0}=\omega_{0}+\bar{\omega}.$  Then we may write the initial value problem for $\omega$ as
\begin{equation}\label{CLMd2}
\omega_{t}=\mathbb{P}_{0}(\omega H(\omega)) + \bar{\omega}H(\omega)-\nu\Lambda^{\sigma}\omega,
\end{equation}
\begin{equation}\label{CLMData2}
\omega(\cdot,0)=\omega_{0}.
\end{equation}
Note that the presence of $\mathbb{P}_{0}$ on the nonlinear term in \eqref{CLMd2} is redundant, but it will be useful
in the bilinear estimates we will be performing.

We will now apply the fixed point result Lemma \ref{fixedpoint} to prove existence of solutions for the Constantin-Lax-Majda
equation.  To do so, we need to complete the following steps:
\begin{itemize}
\item Identify the function space $X$ from the statement of Lemma \ref{fixedpoint},
\item Identify $x_{0}$ and show $x_{0}\in X,$
\item And, establish the bilinear estimate.
\end{itemize}

To begin, the function space we will use is $\mathcal{X}^{\sigma/2}\cap\mathcal{Y}^{-\sigma/2},$ where $\sigma>0$
is the order of the diffusion operator in \eqref{CLMd}.  The norm is given by
\begin{equation*}
\vertiii{f}=\|f\|_{\mathcal{X}^{\sigma/2}}+\|f\|_{\mathcal{Y}^{-\sigma/2}}.
\end{equation*}
For $\omega_{0}\in Y^{-\sigma/2},$ the point $x_{0}$ is given by
\begin{equation*}
x_{0}=e^{-t(\nu\Lambda^{\sigma}-\bar{\omega}H)}\omega_{0}.
\end{equation*}
We next must show that $x_{0}\in\mathcal{X}^{\sigma/2}$ and $x_{0}\in\mathcal{Y}^{-\sigma/2}.$  To calculate the norm
in $\mathcal{Y}^{-\sigma/2},$ we begin with the definition of the norm,
\begin{equation*}
\|x_{0}\|_{\mathcal{Y}^{-\sigma/2}}=\sum_{k\in\mathbb{Z}_{*}}\sup_{t\geq0}|k|^{-\sigma/2}
|e^{-t(\nu|k|^{\sigma}+i\bar{\omega}\mathrm{sgn}(k))}||\hat{\omega}_{0}(k)|.
\end{equation*}
We may simplify the absolute value of the exponential, and we see that the supremum occurs at $t=0.$  This yields
\begin{equation*}
\|x_{0}\|_{\mathcal{Y}^{-\sigma/2}}=\sum_{k\in\mathbb{Z}_{*}}|k|^{-\sigma/2}|\hat{\omega}_{0}(k)|
=\|\omega_{0}\|_{Y^{-\sigma/2}}.
\end{equation*}
We next calculate the norm of $x_{0}$ in the space $\mathcal{X}^{\sigma/2},$ beginning with the definition of the norm:
\begin{equation*}
\|x_{0}\|_{\mathcal{X}^{\sigma/2}}=\sum_{k\in\mathbb{Z}_{*}}\int_{0}^{\infty}|k|^{\sigma/2}
|e^{-t(\nu|k|^{\sigma}+i\bar{\omega}\mathrm{sgn}(k))}||\hat{\omega}_{0}(k)|\ \mathrm{d}t.
\end{equation*}
We again simplify the absolute value of the exponential, and pull time-independent factors through the integral, finding
\begin{equation*}
\|x_{0}\|_{\mathcal{X}^{\sigma/2}}=\sum_{k\in\mathbb{Z}_{*}}(|k|^{\sigma/2}|\hat{\omega}_{0}(k)|)
\int_{0}^{\infty}e^{-t\nu|k|^{\sigma}}\ \mathrm{d}t.
\end{equation*}
Evaluating this integral, we find
\begin{equation*}
\|x_{0}\|_{\mathcal{X}^{\sigma/2}}=\frac{1}{\nu}\sum_{k\in\mathbb{Z}_{*}}|k|^{-\sigma/2}|\hat{\omega}_{0}(k)|
=\frac{1}{\nu}\|\omega_{0}\|_{Y^{-\sigma/2}}.
\end{equation*}

The bilinear operator associated to the evolution \eqref{CLMd2} is
\begin{equation}\label{CLMd2Bilinear}
B(u,v)=\int_{0}^{t}e^{-(t-s)(\nu\Lambda^{\sigma}-\bar{\omega}H)}\mathbb{P}_{0}(uH(v))\ \mathrm{d}s.
\end{equation}
What remains is to estimate the bilinear operator in the spaces $\mathcal{X}^{\sigma/2}$ and $\mathcal{Y}^{-\sigma/2}.$

We begin with the estimate of $B(u,v)$ in $\mathcal{Y}^{-\sigma/2}.$
We have
\begin{multline}\label{CLMd2BilinearYest}
\|B(u,v)\|_{\mathcal{Y}^{-\sigma/2}} = \\
\sum_{k\in\mathbb{Z}_{*}}\sup_{t\geq 0}
\left|
\frac{1}{|k|^{\sigma/2}}
\sum_{j\in\mathbb{Z}_{*},j\neq k}
\int_{0}^{t}
e^{-(t-s)(\nu|k|^{\sigma}+i\bar{\omega}\mathrm{sgn}(k))}
\hat{u}(k-j,s)
(-i\mathrm{sgn}(j))\hat{v}(j,s)
\ \mathrm{d}s\right|.
\end{multline}
By the triangle inequality, we may immediately bound this as
\begin{equation*}
\|B(u,v)\|_{\mathcal{Y}^{-\sigma/2}}\leq
\sum_{k\in\mathbb{Z}_{*}}\sup_{t\geq0}
\frac{1}{|k|^{\sigma/2}}
\sum_{j\in\mathbb{Z}_{*}, j\neq k}
\int_{0}^{t}e^{-(t-s)|k|^{\sigma}}|\hat{u}(k-j,s)| |\hat{v}(j,s)|\ \mathrm{d}s.
\end{equation*}
We furthermore may neglect the exponential since the exponent is negative.  We also multiply and divide by
$|k-j|^{\sigma/2}|j|^{\sigma/2},$ arriving at
\begin{multline}\label{BYToBeEstimated}
\|B(u,v)\|_{\mathcal{Y}^{-\sigma/2}}\leq
\\
\sum_{k\in\mathbb{Z}_{*}}\sup_{t\geq0}\sum_{j\in\mathbb{Z}_{*},j\neq k}\int_{0}^{t}
\left(\frac{|k-j|^{\sigma/2}}{|k|^{\sigma/2}|j|^{\sigma/2}}\right)
\frac{|\hat{u}(k-j,s)|}{|k-j|^{\sigma/2}}|j|^{\sigma/2}|\hat{v}(j,s)|\ \mathrm{d}s.
\end{multline}
The factors of $j,$ $k,$ and $k-j$ satisfy the elementary inequality
\begin{equation}\label{jkk-jInequality}
\sup_{k\neq0,j\neq0}\frac{|k-j|^{\sigma/2}}{|k|^{\sigma/2}|j|^{\sigma/2}}\leq 2^{\sigma/2}.
\end{equation}
We then estimate \eqref{BYToBeEstimated} as
\begin{multline}\nonumber
\|B(u,v)\|_{\mathcal{Y}^{-\sigma/2}}\leq
\\
\left(\sup_{j\neq 0, k\neq0}\frac{|k-j|^{\sigma/2}}{|k|^{\sigma/2}|j|^{\sigma/2}}\right)
\left(\sum_{k\in\mathbb{Z}_{*}}\sup_{s\geq0}\frac{|\hat{u}(k-j,s)|}{|k-j|^{\sigma/2}}\right)
\left(\sum_{j\in\mathbb{Z}_{*},j\neq k}
\int_{0}^{\infty} |j|^{\sigma/2}|\hat{v}(j,s)|
\ \mathrm{d}s\right)
\\
\leq 2^{\sigma/2}\|u\|_{\mathcal{Y}^{-\sigma/2}}\|v\|_{\mathcal{X}^{\sigma/2}}.
\end{multline}

We are ready to estimate the bilinear term in $\mathcal{X}^{\sigma/2}.$  We begin just by using the definition of the norm,
\begin{multline}\nonumber
\|B(u,v)\|_{\mathcal{X}^{\sigma/2}}=\sum_{k\in\mathbb{Z}_{*}}\int_{0}^{\infty}\Bigg||k|^{\sigma/2}
\int_{0}^{t}\Bigg[\sum_{j\in\mathbb{Z}_{*},j\neq k}
e^{-(t-s)(\nu|k|^{\sigma}-i\bar{\omega}\mathrm{sgn}(k))}
\cdot
\\
\cdot
\hat{u}(k-j,s)(-i\mathrm{sgn}(j))\hat{v}(j,s)\Bigg]\ \mathrm{d}s\Bigg|\ \mathrm{d}t.
\end{multline}
Applying the triangle inequality, we get
\begin{equation*}
\|B(u,v)\|_{\mathcal{X}^{\sigma/2}}\leq\sum_{k\in\mathbb{Z}_{*}}\int_{0}^{\infty}|k|^{\sigma/2}
\int_{0}^{t}\sum_{j\in\mathbb{Z}_{*},j\neq k}
e^{-(t-s)\nu|k|^{\sigma}}
|\hat{u}(k-j,s)||\hat{v}(j,s)|\ \mathrm{d}s\mathrm{d}t.
\end{equation*}
We change the order of integration, finding
\begin{equation*}
\|B(u,v)\|_{\mathcal{X}^{\sigma/2}}\leq\sum_{k\in\mathbb{Z}_{*}}\int_{0}^{\infty}|k|^{\sigma/2}
\sum_{j\in\mathbb{Z}_{*},j\neq k}|\hat{u}(k-j,s)||\hat{v}(j,s)|
\int_{s}^{\infty}e^{-(t-s)\nu|k|^{\sigma}}
\ \mathrm{d}t\mathrm{d}s.
\end{equation*}
We evaluate the last integral on the right-hand side, finding
\begin{equation*}
\|B(u,v)\|_{\mathcal{X}^{\sigma/2}}\leq\frac{1}{\nu}\sum_{k\in\mathbb{Z}_{*}}\int_{0}^{\infty}|k|^{-\sigma/2}
\sum_{j\in\mathbb{Z}_{*},j\neq k}|\hat{u}(k-j,s)||\hat{v}(j,s)|
\ \mathrm{d}s.
\end{equation*}
We adjust the factors of $|k-j|$ and $|j|:$
\begin{equation*}
\|B(u,v)\|_{\mathcal{X}^{\sigma/2}}\leq\frac{1}{\nu}\sum_{k\in\mathbb{Z}_{*}}\int_{0}^{\infty}
\sum_{j\in\mathbb{Z}_{*},j\neq k}
\left(\frac{|k-j|^{\sigma/2}}{|k|^{\sigma/2}|j|^{\sigma/2}}\right)
\frac{|\hat{u}(k-j,s)|}{|k-j|^{\sigma/2}}|j|^{\sigma/2}|\hat{v}(j,s)|
\ \mathrm{d}s.
\end{equation*}
As before, we use the inequality \eqref{jkk-jInequality}, and
then we immediately find the estimate
\begin{equation*}
\|B(u,v)\|_{\mathcal{X}^{\sigma/2}}\leq\frac{2^{\sigma/2}}{\nu}\|u\|_{\mathcal{Y}^{-\sigma/2}}\|v\|_{\mathcal{X}^{\sigma/2}}.
\end{equation*}

We have completed the application of Lemma \ref{fixedpoint}, and we have therefore proven the following theorem:
\begin{theorem}\label{CLMXYTheorem}
Let $\sigma>0$ be given.  There exists $\varepsilon>0$ such that for all $\bar{\omega}\in\mathbb{R}$ and
all $\omega_{0}\in Y^{-\sigma/2}$ with $\|\omega_{0}\|_{Y^{-\sigma/2}}<\varepsilon,$
there exists $\omega\in\mathcal{X}^{\sigma/2}\cap\mathcal{Y}^{-\sigma/2}$ such that
$\tilde{\omega}=\bar{\omega}+\omega$ is a mild solution
of the initial value problem \eqref{CLMd}, \eqref{CLMData}, with initial data $\tilde{\omega}_{0}=\bar{\omega}+\omega_{0}.$
\end{theorem}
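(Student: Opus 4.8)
The plan is to verify the hypotheses of the bilinear fixed point result, Lemma~\ref{fixedpoint}, in the Banach space $X=\mathcal{X}^{\sigma/2}\cap\mathcal{Y}^{-\sigma/2}$ with the sum norm $\vertiii{f}=\|f\|_{\mathcal{X}^{\sigma/2}}+\|f\|_{\mathcal{Y}^{-\sigma/2}}$. First I would rewrite the mild (Duhamel) formulation of \eqref{CLMd2}--\eqref{CLMData2} in the abstract form $\omega=x_0+B(\omega,\omega)$, where $x_0=e^{-t(\nu\Lambda^{\sigma}-\bar{\omega}H)}\omega_0$ is the linear evolution of the data and $B$ is the bilinear Duhamel operator \eqref{CLMd2Bilinear}; since $\mathbb{P}_{0}(uH(v))$ is linear in each of $u$ and $v$, the map $B$ is genuinely bilinear, and boundedness will give the continuity required by the lemma. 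A solution of this fixed-point equation then yields, on setting $\tilde{\omega}=\bar{\omega}+\omega$, a mild solution of \eqref{CLMd}, \eqref{CLMData} with data $\tilde{\omega}_0=\bar{\omega}+\omega_0$.

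Next I would check $x_0\in X$. The key observation is that the advection parameter enters the symbol of the linear semigroup only through the unitary factor $e^{-it\bar{\omega}\,\mathrm{sgn}(k)}$, so $|e^{-t(\nu|k|^{\sigma}+i\bar{\omega}\,\mathrm{sgn}(k))}|=e^{-t\nu|k|^{\sigma}}$; the $\mathcal{Y}^{-\sigma/2}$ norm is therefore attained at $t=0$ and equals $\|\omega_0\|_{Y^{-\sigma/2}}$, while the $\mathcal{X}^{\sigma/2}$ norm, after evaluating $\int_0^{\infty}e^{-t\nu|k|^{\sigma}}\,dt=1/(\nu|k|^{\sigma})$, equals $\tfrac{1}{\nu}\|\omega_0\|_{Y^{-\sigma/2}}$. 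Hence $\vertiii{x_0}=(1+\tfrac{1}{\nu})\|\omega_0\|_{Y^{-\sigma/2}}$, with a constant depending only on $\nu$ and, crucially, uniform in $\bar{\omega}$.

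The substance of the argument is the bilinear bound $\vertiii{B(u,v)}\leq\eta\,\vertiii{u}\,\vertiii{v}$ for a constant $\eta=\eta(\sigma,\nu)$. I would expand $\mathbb{P}_{0}(uH(v))$ in Fourier series, noting that $\mathbb{P}_{0}$ restricts the convolution to $j\neq k$, so that $k-j\neq0$; this keeps us among mean-zero functions and legitimizes the weight manipulations. After the triangle inequality reduces everything to absolute values, the two component norms are handled separately: in $\mathcal{X}^{\sigma/2}$ one swaps the $s$- and $t$-integrations and computes $\int_s^{\infty}e^{-(t-s)\nu|k|^{\sigma}}\,dt=1/(\nu|k|^{\sigma})$, which combines with the prefactor $|k|^{\sigma/2}$ to leave $|k|^{-\sigma/2}/\nu$; in $\mathcal{Y}^{-\sigma/2}$ one simply discards the exponential. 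In both cases one inserts the weights $|k-j|^{\sigma/2}|j|^{\sigma/2}$, applies the elementary inequality \eqref{jkk-jInequality}, $|k-j|^{\sigma/2}/(|k|^{\sigma/2}|j|^{\sigma/2})\leq 2^{\sigma/2}$ (valid because $|k|,|j|\geq1$), and distributes the remaining factors so that $u$ carries the $\mathcal{Y}^{-\sigma/2}$ weight (supremum in time, on frequency $k-j$) while $v$ carries the $\mathcal{X}^{\sigma/2}$ weight (integral in time, on frequency $j$). This gives $\|B(u,v)\|_{\mathcal{Y}^{-\sigma/2}}\leq 2^{\sigma/2}\|u\|_{\mathcal{Y}^{-\sigma/2}}\|v\|_{\mathcal{X}^{\sigma/2}}$ and $\|B(u,v)\|_{\mathcal{X}^{\sigma/2}}\leq (2^{\sigma/2}/\nu)\|u\|_{\mathcal{Y}^{-\sigma/2}}\|v\|_{\mathcal{X}^{\sigma/2}}$, so that $\eta=2^{\sigma/2}(1+1/\nu)$ bounds $\|B\|_{X\times X\to X}$. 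I expect this estimate to be the main obstacle: it is delicate that the nonlinearity is controlled only in the asymmetric pairing ``$u$ in the weak norm, $v$ in the strong norm,'' so one must be careful never to need both factors in the strong norm, and the weight inequality genuinely requires the Fourier support to avoid the origin — which is why working with mean-zero periodic functions, and the seemingly redundant $\mathbb{P}_{0}$ on the nonlinear term, is essential rather than cosmetic.

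Finally, with $\eta$ and the bound on $\vertiii{x_0}$ in hand, I would choose $\varepsilon=\varepsilon(\sigma,\nu)>0$ small enough that $\|\omega_0\|_{Y^{-\sigma/2}}<\varepsilon$ forces $4\eta\vertiii{x_0}<1$. Lemma~\ref{fixedpoint} then produces a unique $\omega\in X$ with $\vertiii{\omega}<1/(2\eta)$ solving $\omega=x_0+B(\omega,\omega)$, and $\tilde{\omega}=\bar{\omega}+\omega$ is the desired mild solution. Since none of the estimates above involves $\bar{\omega}$, the threshold $\varepsilon$ is uniform over all $\bar{\omega}\in\mathbb{R}$, as claimed.
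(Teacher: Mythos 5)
Your proposal is correct and follows essentially the same route as the paper: the same space $\mathcal{X}^{\sigma/2}\cap\mathcal{Y}^{-\sigma/2}$ with the sum norm, the same computation showing $\vertiii{x_0}\leq(1+\tfrac{1}{\nu})\|\omega_0\|_{Y^{-\sigma/2}}$, the same asymmetric bilinear bounds via \eqref{jkk-jInequality} with constants $2^{\sigma/2}$ and $2^{\sigma/2}/\nu$, and the same invocation of Lemma~\ref{fixedpoint}. The only quibble is attribution of the frequency restrictions: $j\neq k$ in the convolution comes from $u$ having mean zero (so $\hat{u}(0)=0$), while $\mathbb{P}_0$ on the nonlinearity is what removes the $k=0$ output mode — but this does not affect the argument.
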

We remark that it is implicit in the assumption that $\omega_{0}\in Y^{-\sigma/2}$ that $\omega_{0}$ has mean zero.
Also note that the solution is automatically global in time, following from the definition of the spaces $\mathcal{X}^{\sigma/2}$
and $\mathcal{Y}^{-\sigma/2}.$

We have proved existence of global solutions for the dissipative Constantin-Lax-Majda problem when the diffusion parameter
satisfies only $\sigma>0.$  However if we further restrict to $\sigma\geq1,$ then
we can demonstrate analyticity of the solutions (possibly restricting to slightly smaller data).
We delay this argument until Section \ref{analyticitySection} below, as there we will simultaneously treat the analyticity of solutions
the dissipative Constantin-Lax-Majda both with and without added advection.

\section{The Constantin-Lax-Majda equation with advection and dissipation}\label{gCLMSection}

We now consider a generalization of the Constantin-Lax-Majda equation with nonlocal advection,
\begin{equation}\label{gCLMd}
\tilde{\omega}_{t}=\tilde{\omega}H(\tilde{\omega})+a\tilde{u}\tilde{\omega}_{x}-\nu\Lambda^{\sigma}\tilde{\omega},
\end{equation}
where the advective velocity $\tilde{u}$ is given by
\begin{equation*}
\tilde{u}_{x}=H(\tilde{\omega}).
\end{equation*}
This is the same as saying $\tilde{u}=\Lambda^{-1}\mathbb{P}_{0}\tilde{\omega}.$  Of course we also take the initial data
\eqref{CLMData}, as before.

As we remarked in the introduction, this is the form of advection as introduced by Okamoto, Sakajo, and Wunsch
\cite{okamotoEtAl}.
 The question of singularity formation versus global existence in the generalized
Constantin-Lax-Majda equation with various values of the parameters $\sigma$ and $a$
has been studied previously in a number of works, especially  \cite{ALSS}, \cite{chen}, \cite{CCF}, \cite{dong},  \cite{kimEtAl}, \cite{kiselev}, \cite{liRodrigo}, \cite{ALSS2},
\cite{silvestreVicol}.

As in the case without advection, the mean of $\tilde{\omega}$ is preserved.  This uses both the previously-presented
facts \eqref{hilbertProperties}, as well as
\begin{equation*}
\mathbb{P}_{0}((\Lambda^{-1}f)f_{x})=0.
\end{equation*}
This can be seen by writing out the integral, integrating by parts, and then using \eqref{hilbertProperties}.
We again let $\omega=\mathbb{P}_{0}\tilde{\omega},$ finding the evolution equation
\begin{equation}\label{gCLMd2}
\omega_{t}=\mathbb{P}_{0}(\omega H(\omega)) + a\mathbb{P}_{0}((\Lambda^{-1}\mathbb{P}_{0}\omega)\omega_{x})
+\bar{\omega}H(\omega)-\nu\Lambda^{\sigma}\omega.
\end{equation}
We again take this with data \eqref{CLMData2}.
The bilinear term associated to the evolution \eqref{gCLMd2} is
\begin{equation}\label{e:calBil}
\mathcal{B}(u,v)=B(u,v)+aB_{adv}(u,v),
\end{equation}
with $B(u,v)$ given in \eqref{CLMd2Bilinear} and $B_{adv}(u,v)$ given by
\begin{equation}\label{e:advBil}
B_{adv}(u,v)=\int_{0}^{t}e^{-(t-s)\Lambda^{\sigma}-\bar{\omega}H)}\mathbb{P}_{0}((\Lambda^{-1}\mathbb{P}_{0}u)v_{x})
\ \mathrm{d}s.
\end{equation}

\subsection{Existence of solutions with $Y^{-\sigma/2}$ data} As in the case without advection, we are able to
prove existence of solutions in $\mathcal{Y}^{-\sigma/2}\cap\mathcal{X}^{\sigma/2},$ with data in $Y^{-\sigma/2}.$
In the case without advection we only needed to assume $\sigma>0,$ but now we will require $\sigma\geq1.$

We will estimate the new contribution to the bilinear form, $B_{adv},$ in both $\mathcal{Y}^{-\sigma/2}$ and
$\mathcal{X}^{\sigma/2}.$  We begin with stating the norm in $\mathcal{Y}^{-\sigma/2},$ and making some elementary
manipulations:
\begin{multline}\label{advectionYEstimate}
\|B_{adv}(u,v)\|_{\mathcal{Y}^{-\sigma/2}}
\\
=\sum_{k\in\mathbb{Z}_{*}}\sup_{t\geq0}|k|^{-\sigma/2}
\left|\sum_{j\in\mathbb{Z}_{*},j\neq k}\int_{0}^{t}
e^{-(t-s)(\nu|k|^{\sigma}+i\bar{\omega}\mathrm{sgn}(k))}\frac{\hat{u}(k-j,s)}{|k-j|}ij\hat{v}(j,s)\ \mathrm{d}s\right|
\\
\leq\sum_{k\in\mathbb{Z}_{*}}\sum_{j\in\mathbb{Z}_{*},j\neq k}\int_{0}^{\infty}
|k|^{-\sigma/2}\frac{|\hat{u}(k-j,s)|}{|k-j|}|j||\hat{v}(j,s)|\ \mathrm{d}s.
\end{multline}
We adjust factors of $j,$ $k,$ and $k-j,$ arriving at
\begin{equation*}
\|B_{adv}(u,v)\|_{\mathcal{Y}^{-\sigma/2}}\leq
\left(\sup_{j,k} \frac{1}{|k|}\left(\frac{|k||j|}{|k-j|}\right)^{1-\sigma/2}\right)
\|u\|_{\mathcal{Y}^{-\sigma/2}}\|v\|_{\mathcal{X}^{\sigma/2}}.
\end{equation*}
If $\sigma\leq 2,$ then we may write this supremum as
\begin{equation*}
\sup_{j,k}\frac{1}{|k|}\left(\frac{|k||j|}{|k-j|}\right)^{1-\sigma/2}
=\sup_{j,k}|k|^{1-\sigma}\left(\frac{|j|}{|k||k-j|}\right)^{1-\sigma/2}\leq 2^{1-\sigma/2}.
\end{equation*}
Note that we have used here the requirement $\sigma\geq1.$
If instead $\sigma>2,$ then we write the supremum as
\begin{equation*}
\sup_{j,k}\frac{1}{|k|}\left(\frac{|k||j|}{|k-j|}\right)^{1-\sigma/2}
=\sup_{j,k}\frac{1}{|k|}\left(\frac{|k-j|}{|k||j|}\right)^{\sigma/2-1}\leq 2^{\sigma/2-1}.
\end{equation*}
In either case, we have established the bound
\begin{equation*}
\|B_{adv}(u,v)\|_{\mathcal{Y}^{-\sigma/2}}\leq c \|u\|_{\mathcal{Y}^{-\sigma/2}}\|v\|_{\mathcal{X}^{\sigma/2}}.
\end{equation*}

Next, we estimate $B_{adv}(u,v)$ in the space $\mathcal{X}^{\sigma/2}.$  We begin with the expression of
the norm of $B_{adv}(u,v)$ in this space, and make some elementary manipulations:
\begin{multline}\nonumber
\|B_{adv}(u,v)\|_{\mathcal{X}^{\sigma/2}}
\\
=\sum_{k\in\mathbb{Z}_{*}}\int_{0}^{\infty}|k|^{\sigma/2}\left|\sum_{j\in\mathbb{Z}_{*},j\neq k}\int_{0}^{t}
e^{-(t-s)(\nu|k|^{\sigma}+i\bar{\omega}\mathrm{sgn}(k))}\frac{\hat{u}(k-j,s)}{|k-j|}ij\hat{v}(j,s)\ \mathrm{d}s\right|\ \mathrm{d}t
\\
\leq
\sum_{k\in\mathbb{Z}_{*}}\sum_{j\in\mathbb{Z}_{*},j\neq k}|k|^{\sigma/2}\int_{0}^{\infty}\int_{0}^{t}e^{-(t-s)\nu|k|^{\sigma}}
\frac{|\hat{u}(k-j,s)|}{|k-j|}|j||\hat{v}(j,s)|\ \mathrm{d}s\mathrm{d}t.
\end{multline}
We change the order of integration and evaluate the integral with respect to $t,$ finding
\begin{equation*}
\|B_{adv}(u,v)\|_{\mathcal{X}^{\sigma/2}}\leq
\frac{1}{\nu}\sum_{k\in\mathbb{Z}_{*}}\sum_{j\in\mathbb{Z}_{*},j\neq k}\int_{0}^{\infty}|k|^{-\sigma/2}
\frac{|\hat{u}(k-j,s)|}{|k-j|}|j||\hat{v}(j,s)|\ \mathrm{d}s.
\end{equation*}
The quantity on the right-hand side is the same quantity as on the right-hand side of \eqref{advectionYEstimate},
and it may be estimated in the same way.  This establishes
\begin{equation*}
\|B_{adv}(u,v)\|_{\mathcal{X}^{\sigma/2}}\leq c \|u\|_{\mathcal{Y}^{-\sigma/2}}\|v\|_{\mathcal{X}^{\sigma/2}}.
\end{equation*}

Keeping in mind the proof of Theorem \ref{CLMXYTheorem} for the Constantin-Lax-Majda equation without advection,
we have now proved the following.
\begin{theorem}\label{CLMAdvectionXYTheorem}
Let $\sigma\geq1$ and $a\in\mathbb{R}$ be given.  There exists $\varepsilon>0$ such that for all
$\bar{\omega}\in\mathbb{R}$ and for
all $\omega_{0}\in Y^{-\sigma/2}$ satisfying $\|\omega_{0}\|_{Y^{-\sigma/2}}<\varepsilon,$
there exists $\omega\in\mathcal{X}^{\sigma/2}\cap\mathcal{Y}^{-\sigma/2}$ such that
$\tilde{\omega}=\bar{\omega}+\omega$ is a mild solution
of the initial value problem \eqref{gCLMd}, \eqref{CLMData},
with initial data $\tilde{\omega}_{0}=\bar{\omega}+\omega_{0}.$
\end{theorem}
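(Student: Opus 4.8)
The plan is to apply the bilinear fixed point result Lemma \ref{fixedpoint} in essentially the same way as in the proof of Theorem \ref{CLMXYTheorem}, the only genuinely new ingredient being the estimates on the advective bilinear operator $B_{adv}$ carried out just above. First I would keep $X=\mathcal{X}^{\sigma/2}\cap\mathcal{Y}^{-\sigma/2}$ with $\vertiii{f}=\|f\|_{\mathcal{X}^{\sigma/2}}+\|f\|_{\mathcal{Y}^{-\sigma/2}},$ take $x_{0}=e^{-t(\nu\Lambda^{\sigma}-\bar{\omega}H)}\omega_{0},$ and use the bilinear operator $\mathcal{B}(u,v)=B(u,v)+aB_{adv}(u,v)$ of \eqref{e:calBil}, so that the equation $\omega=x_{0}+\mathcal{B}(\omega,\omega)$ is exactly the Duhamel formulation of \eqref{gCLMd2}, \eqref{CLMData2}. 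Since the linear part of \eqref{gCLMd2} is identical to that of \eqref{CLMd2}, the computations of Section \ref{CLMSection} apply verbatim and give $\|x_{0}\|_{\mathcal{Y}^{-\sigma/2}}=\|\omega_{0}\|_{Y^{-\sigma/2}}$ and $\|x_{0}\|_{\mathcal{X}^{\sigma/2}}=\tfrac{1}{\nu}\|\omega_{0}\|_{Y^{-\sigma/2}},$ hence $\vertiii{x_{0}}=(1+\tfrac{1}{\nu})\|\omega_{0}\|_{Y^{-\sigma/2}};$ in particular $x_{0}\in X.$

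Next I would assemble the bilinear bound on $X.$ From Section \ref{CLMSection} we have $\|B(u,v)\|_{\mathcal{Y}^{-\sigma/2}}\leq 2^{\sigma/2}\|u\|_{\mathcal{Y}^{-\sigma/2}}\|v\|_{\mathcal{X}^{\sigma/2}}$ and $\|B(u,v)\|_{\mathcal{X}^{\sigma/2}}\leq \tfrac{2^{\sigma/2}}{\nu}\|u\|_{\mathcal{Y}^{-\sigma/2}}\|v\|_{\mathcal{X}^{\sigma/2}},$ and from the estimates established just above $\|B_{adv}(u,v)\|_{\mathcal{Y}^{-\sigma/2}}\leq c\|u\|_{\mathcal{Y}^{-\sigma/2}}\|v\|_{\mathcal{X}^{\sigma/2}}$ and $\|B_{adv}(u,v)\|_{\mathcal{X}^{\sigma/2}}\leq c\|u\|_{\mathcal{Y}^{-\sigma/2}}\|v\|_{\mathcal{X}^{\sigma/2}}.$ Since $\|u\|_{\mathcal{Y}^{-\sigma/2}}\leq\vertiii{u}$ and $\|v\|_{\mathcal{X}^{\sigma/2}}\leq\vertiii{v},$ adding these inequalities gives $\vertiii{\mathcal{B}(u,v)}\leq\eta\,\vertiii{u}\,\vertiii{v}$ with $\eta=2^{\sigma/2}(1+\tfrac{1}{\nu})+2|a|c,$ a constant depending only on $\sigma,$ $\nu,$ and $a.$ Together with bilinearity, this makes $\mathcal{B}$ a continuous bilinear operator on $X$ with $\|\mathcal{B}\|_{X\times X\to X}\leq\eta.$

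Finally I would choose $\varepsilon>0$ with $4\eta(1+\tfrac{1}{\nu})\varepsilon<1$ — this $\varepsilon$ is independent of $\bar{\omega},$ since $\bar{\omega}$ enters the semigroup only through the unimodular factor $e^{-it\bar{\omega}\,\mathrm{sgn}(k)}$ and hence disappears under every absolute value — so that $\|\omega_{0}\|_{Y^{-\sigma/2}}<\varepsilon$ implies $4\eta\vertiii{x_{0}}<1.$ Lemma \ref{fixedpoint} then produces a solution $\omega\in X$ (unique in the ball $\vertiii{\cdot}<\tfrac{1}{2\eta}$) of $\omega=x_{0}+\mathcal{B}(\omega,\omega),$ and $\tilde{\omega}=\bar{\omega}+\omega$ is the desired mild solution of \eqref{gCLMd}, \eqref{CLMData}; it is global because $\mathcal{X}^{\sigma/2}$ and $\mathcal{Y}^{-\sigma/2}$ are defined over $[0,\infty).$ I expect the real obstacle to lie not in this final assembly but in the estimate of $B_{adv}$ above: the advective nonlinearity $(\Lambda^{-1}\mathbb{P}_{0}u)v_{x}$ costs a net half-derivative relative to $uH(v),$ and closing the estimate requires this loss to be absorbed by the parabolic gain built into $\mathcal{X}^{\sigma/2},$ which forces the hypothesis $\sigma\geq1$ (in contrast to $\sigma>0$ in the no-advection case), with the case distinction $\sigma\leq2$ versus $\sigma>2$ in the frequency supremum being the fussy technical point.
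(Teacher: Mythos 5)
Your proposal is correct and follows essentially the same route as the paper: the paper proves Theorem \ref{CLMAdvectionXYTheorem} precisely by establishing the $\mathcal{Y}^{-\sigma/2}$ and $\mathcal{X}^{\sigma/2}$ bounds on $B_{adv}$ (with the $\sigma\le 2$ versus $\sigma>2$ case split and the $\sigma\ge1$ hypothesis entering exactly where you say) and then invoking the setup of Theorem \ref{CLMXYTheorem} together with Lemma \ref{fixedpoint}. Your final assembly of the combined bilinear constant $\eta$ and the choice of $\varepsilon$ independent of $\bar{\omega}$ just makes explicit what the paper leaves implicit.
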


\begin{remark} The choice of spaces $\mathcal{Y}^{-\sigma/2}$ and $\mathcal{X}^{\sigma/2}$ is consistent
with the Lei-Lin result on the Navier-Stokes equations \cite{leiLin}, in the sense that there the appropriate value of $\sigma$
is $\sigma=2,$ and the Bae approach to the Lei-Lin solution gives existence in the three-dimensional versions of
$\mathcal{Y}^{-1}\cap\mathcal{X}^{1}$ \cite{ALN4},  \cite{baePAMS}.
\end{remark}

\subsection{Existence of solutions with $PM^{r}$ data for $r\leq0$}

We will prove existence of solutions in $\mathcal{PM}^{r}\cap\mathcal{Z}^{r+\sigma},$ with data in $PM^{r},$
when $\sigma>1$ and $r\in\big(\frac{1-\sigma}{2},0\big].$  This result holds for all values of $a\in\mathbb{R},$
and in particular it holds for $a=0$ and $a\neq0.$  Thus it is a result for the dissipative Constantin-Lax-Majda
equation both with and without advection.

\begin{theorem} \label{ExistsAdvPMr} Let $a\in\mathbb{R},$
$\sigma>1$ and $r\in\left(\frac{1-\sigma}{2},0\right]$ be given.
There exists $\varepsilon>0$ such that for any $\bar{\omega}\in\mathbb{R}$ and $\omega_{0}\in PM^{r}$
satisfying $\|\omega_{0}\|_{PM^{r}}<\varepsilon,$
the equation \eqref{gCLMd}
has a global mild solution $\tilde{\omega}=\bar{\omega}+\omega,$
with initial data $\tilde{\omega}_{0}=\bar{\omega}+\omega_{0},$ and with
$\omega\in\mathcal{PM}^{r}\cap\mathcal{Z}^{r+\sigma}.$
\end{theorem}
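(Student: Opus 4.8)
The plan is to invoke the bilinear fixed point result, Lemma~\ref{fixedpoint}, exactly as it was used for Theorem~\ref{CLMXYTheorem}, but now with the Banach space
\[
X=\mathcal{PM}^{r}\cap\mathcal{Z}^{r+\sigma},\qquad \vertiii{f}=\|f\|_{\mathcal{PM}^{r}}+\|f\|_{\mathcal{Z}^{r+\sigma}},
\]
with $x_{0}=e^{-t(\nu\Lambda^{\sigma}-\bar{\omega}H)}\omega_{0}$, and with $\mathcal{B}=B+aB_{adv}$ the bilinear operator from \eqref{e:calBil}. Three things must be checked: first, that $x_{0}\in X$, with $\vertiii{x_{0}}$ controlled by $\|\omega_{0}\|_{PM^{r}}$; second, that $B$ is a bounded bilinear operator on $X$; and third, that $B_{adv}$ is a bounded bilinear operator on $X$. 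Granting these, $\mathcal{B}$ is a bounded, hence continuous, bilinear operator on $X$, and choosing $\varepsilon>0$ small enough that $\|\omega_{0}\|_{PM^{r}}<\varepsilon$ forces $4\eta\vertiii{x_{0}}<1$; Lemma~\ref{fixedpoint} then produces the solution. Since each of $\mathcal{PM}^{r}$ and $\mathcal{Z}^{r+\sigma}$ carries the semi-infinite time interval, the solution is automatically global.

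The first item is a repetition of the corresponding computation in Section~\ref{CLMSection}: since $|e^{-t(\nu|k|^{\sigma}+i\bar{\omega}\mathrm{sgn}(k))}|=e^{-t\nu|k|^{\sigma}}$, the supremum defining $\|x_{0}\|_{\mathcal{PM}^{r}}$ is attained at $t=0$ and equals $\|\omega_{0}\|_{PM^{r}}$, while $\int_{0}^{\infty}e^{-t\nu|k|^{\sigma}}\,\mathrm{d}t=1/(\nu|k|^{\sigma})$ gives $\|x_{0}\|_{\mathcal{Z}^{r+\sigma}}=\nu^{-1}\|\omega_{0}\|_{PM^{r}}$, so $\vertiii{x_{0}}=(1+\nu^{-1})\|\omega_{0}\|_{PM^{r}}$. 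In particular the oscillatory factor coming from $\bar{\omega}H$ plays no role, and $\varepsilon$ may be taken to depend only on $\nu,\sigma,r,a$, not on $\bar{\omega}$, consistent with the statement.

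The heart of the matter is the two bilinear estimates, which I would carry out by mimicking the $\mathcal{Y}^{-\sigma/2}$ and $\mathcal{X}^{\sigma/2}$ bilinear estimates of Sections~\ref{CLMSection} and~\ref{gCLMSection}, but now with Fourier coefficients measured only in $\ell^{\infty}$. For $B$, one uses $|\widehat{\mathbb{P}_{0}(uH(v))}(k,s)|\leq\sum_{j\in\mathbb{Z}_{*},\,j\neq k}|\hat{u}(k-j,s)|\,|\hat{v}(j,s)|$, bounds the semigroup exponential by $1$ in the $\mathcal{PM}^{r}$ estimate and integrates it against $\mathrm{d}t$ (after switching the order of integration) to produce the factor $1/(\nu|k|^{\sigma})$ in the $\mathcal{Z}^{r+\sigma}$ estimate, and measures the $(k-j)$-slot with the $\mathcal{PM}^{r}$ norm of $u$ and the $j$-slot with the $\mathcal{Z}^{r+\sigma}$ norm of $v$. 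Both resulting estimates reduce to the scalar bound
\[
\sup_{k\in\mathbb{Z}_{*}}\,|k|^{r}\sum_{j\in\mathbb{Z}_{*},\,j\neq k}|k-j|^{-r}\,|j|^{-r-\sigma}<\infty.
\]
For $B_{adv}$, using $|\widehat{\mathbb{P}_{0}((\Lambda^{-1}\mathbb{P}_{0}u)v_{x})}(k,s)|\leq\sum_{j}\tfrac{|j|}{|k-j|}|\hat{u}(k-j,s)|\,|\hat{v}(j,s)|$, the same reductions reduce both estimates to
\[
\sup_{k\in\mathbb{Z}_{*}}\,|k|^{r}\sum_{j\in\mathbb{Z}_{*},\,j\neq k}|k-j|^{-r-1}\,|j|^{1-r-\sigma}<\infty.
\]

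I expect these two elementary summation bounds to be the main obstacle, and they are precisely where the hypotheses $\sigma>1$ and $r\in\big(\tfrac{1-\sigma}{2},0\big]$ are used. To verify them one splits the sum over $j$ into the regimes $|j|\leq|k|/2$ (where $|k-j|\sim|k|$, and the remaining factors of $|j|$ are handled using $r+\sigma>1$, which follows from $\sigma>1$ since then $r>\tfrac{1-\sigma}{2}>1-\sigma$), $|j|\geq 2|k|$ (where $|k-j|\sim|j|$, and the tail converges because $2r+\sigma>1$, i.e.\ $r>\tfrac{1-\sigma}{2}$, which is exactly the openness of the interval for $r$), and $|k|/2\leq|j|\leq 2|k|$ (where $|k|$, $|j|$ and $|k-j|$ are comparable up to the count of the $\sim|k|$ summands). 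In each regime the net power of $|k|$ works out to be nonpositive because $\sigma>1$, and at the endpoint $r=0$ an at-worst logarithmic factor is absorbed by the strictly negative power $|k|^{1-\sigma}$. Given these bounds, $B$ and $B_{adv}$ are bounded bilinear maps on $X$, $\mathcal{B}$ is as well, and Lemma~\ref{fixedpoint} completes the proof.
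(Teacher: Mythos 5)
Your proposal is correct and follows the paper's proof essentially step for step: the same space $\mathcal{PM}^{r}\cap\mathcal{Z}^{r+\sigma}$ with the summed norm, the same semigroup computations giving $\vertiii{x_0}\leq(1+\nu^{-1})\|\omega_0\|_{PM^r}$, the same reduction of the bilinear estimates for $B$ and $B_{adv}$ (in both $\mathcal{PM}^{r}$ and, after switching the order of integration, $\mathcal{Z}^{r+\sigma}$) to the two convolution sums $\sup_k|k|^r\sum_j|k-j|^{-r}|j|^{-r-\sigma}$ and $\sup_k|k|^r\sum_j|k-j|^{-1-r}|j|^{1-r-\sigma}$, followed by Lemma~\ref{fixedpoint}. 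The only difference is cosmetic: the paper verifies the first sum via the sub-additivity inequality $|k-j|^{-r}\leq c(|k|^{-r}+|j|^{-r})$ and the second via a case split on $r$ and a four-piece decomposition of the $j$-axis (its Lemma~\ref{finalLemma}), whereas you use a uniform three-regime decomposition $|j|\leq|k|/2$, $|j|\geq 2|k|$, and the comparable range, with both arguments resting on exactly the same conditions $r+\sigma>1$, $2r+\sigma>1$, and $r\leq 0$.
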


\begin{proof} We begin by showing that the semigroup
maps $PM^{r}$ into both $\mathcal{PM}^{r}$ and $\mathcal{Z}^{r+\sigma}.$  We may immediately calculate
\begin{equation}\label{e:lintermgCLMd2PMr}
\|e^{(-\nu\Lambda^{\sigma}+\bar{\omega}H)t}\omega_{0}\|_{\mathcal{PM}^{r}}
= \sup_{k\in\mathbb{Z}_{*}}\sup_{t\geq0} e^{-\nu|k|^{\sigma}t}|k|^{r}|\hat{\omega}_{0}(k)|\leq\|\omega_{0}\|_{PM^{r}}.
\end{equation}
For the corresponding estimate in $\mathcal{Z}^{r+\sigma},$ we express the norm and carry out the integral, finding
\begin{equation}\label{e:lintermgCLMd2Zrandsigma}
\|e^{(-\nu\Lambda^{\sigma}+\bar{\omega}H)t}\omega_{0}\|_{\mathcal{Z}^{r+\sigma}}
=\sup_{k\in\mathbb{Z}_{*}}\int_{0}^{\infty}|k|^{r+\sigma}e^{-\nu|k|^{\sigma}t}|\hat{\omega}_{0}(k)|\ \mathrm{d}t
=\frac{1}{\nu}\|\omega_{0}\|_{PM^{r}}.
\end{equation}

Next, we establish the bilinear estimates.
We estimate $B(u,v)$ in the space $\mathcal{PM}^{r},$ beginning with the expression of the norm of $B(u,v)$ in this space
and making some first estimates:
\begin{multline}\label{CLMTermPMr}
\|B(u,v)\|_{\mathcal{PM}^{r}}
\\
=\sup_{k\in\mathbb{Z}_{*}}\sup_{t\geq0}|k|^{r}\left|\sum_{j\in\mathbb{Z}_{*},j\neq k}\int_{0}^{t}
e^{-(t-s)(\nu|k|^{\sigma}+i\bar{\omega}\mathrm{sgn}(k))}(i\mathrm{sgn}(k-j))\hat{u}(k-j,s)\hat{v}(j,s)\ \mathrm{d}s\right|
\\
\leq \sup_{k\in\mathbb{Z}_{*}}\sum_{j\in\mathbb{Z}_{*},j\neq k}\int_{0}^{\infty}|k|^{r}|\hat{u}(k-j,s)||\hat{v}(j,s)|\ \mathrm{d}s.
\end{multline}
We arrange factors of $j,$ $k,$ and $k-j$ to find the bound
\begin{equation*}
\|B(u,v)\|_{\mathcal{PM}^{r}}\leq
\left(\sup_{k\in\mathbb{Z}_{*}}\sum_{j\in\mathbb{Z}_{*},j\neq k} \frac{|k|^{r}}{|k-j|^{r}|j|^{r+\sigma}}\right)
\|u\|_{\mathcal{PM}^{r}}\|v\|_{\mathcal{Z}^{r+\sigma}}.
\end{equation*}
As long as the quantity which multiplies the norms on the right-hand side is finite, this is the desired bound.
As shown in the appendix of \cite{ALSS}, there exists $c>0$ such that $|k-j|^{-r}\leq c(|k|^{-r}+|j|^{-r}).$  Then, we
estimate the quantity as
\begin{equation*}
\sup_{k\in\mathbb{Z}_{*}}\sum_{j\in\mathbb{Z}_{*},j\neq k}\frac{|k|^{r}}{|k-j|^{r}|j|^{r+\sigma}}
\leq c\sup_{k\in\mathbb{Z}_{*}}
\sum_{j\in\mathbb{Z}_{*},j\neq k}\left(\frac{1}{|j|^{r+\sigma}}+\frac{|k|^{r}}{|j|^{2r+\sigma}}\right).
\end{equation*}
Our choice of $r$ requires both $r\leq0$ and $2r+\sigma>1,$ so this quantity is indeed finite.
We have therefore established the needed bound for $\|B(u,v)\|_{\mathcal{PM}^{r}}.$

We next express the norm of $B(u,v)$ in the space $\mathcal{Z}^{r+\sigma},$ and apply elementary inequalities:
\begin{multline}\nonumber
\|B(u,v)\|_{\mathcal{Z}^{r+\sigma}} =
\\
\sup_{k\in\mathbb{Z}_{*}}\int_{0}^{\infty}\left| |k|^{r+\sigma} \sum_{j\in\mathbb{Z}_{*},j\neq k}\int_{0}^{t}
e^{-(t-s)(\nu|k|^{\sigma}+i\bar{\omega}\mathrm{sgn}(k))}\hat{u}(k-j,s)(-i\mathrm{sgn}(j))\hat{v}(j,s)\ \mathrm{d}s\right|
\\
\leq \sup_{k\in\mathbb{Z}_{*}}\sum_{j\in\mathbb{Z}_{*},j\neq k}\int_{0}^{\infty}\int_{0}^{t} |k|^{r+\sigma}
e^{-(t-s)\nu|k|^{\sigma}}|\hat{u}(k-j,s)| |\hat{v}(j,s)|\ \mathrm{d}s.
\end{multline}
We again change the order of integration, and evaluate the integral of the exponential, finding
\begin{equation}\label{CLMtermZrandsigmaEST}
\|B(u,v)\|_{\mathcal{Z}^{r+\sigma}} \leq \frac{1}{\nu}
\sup_{k\in\mathbb{Z}_{*}}\sum_{j\in\mathbb{Z}_{*},j\neq k}\int_{0}^{\infty} |k|^{r} |\hat{u}(k-j,s)| |\hat{v}(k-j,s)|\ \mathrm{d}s.
\end{equation}
This right-hand side is the same as the right-hand side of \eqref{CLMTermPMr}, and therefore we may then estimate it the same
way.  This concludes the bound for $B(u,v).$

We next must bound $B_{adv}(u,v).$  We begin with its norm in the space $\mathcal{PM}^{r}:$
\begin{multline}\label{toBeSeenAgain}
\|B_{adv}(u,v)\|_{\mathcal{PM}^{r}} =
\\
\sup_{k\in\mathbb{Z}_{*}}\sup_{t\geq0}\left| |k|^{r} \sum_{j\in\mathbb{Z}_{*},j\neq k}\int_{0}^{t}
e^{-(t-s)(\nu|k|^{\sigma}+i\bar{\omega}\mathrm{sgn}(k))}\frac{\hat{u}(k-j,s)}{|k-j|}ij\hat{v}(j,s)\ \mathrm{d}s\right|
\\
\leq
\sup_{k\in\mathbb{Z}_{*}}\sum_{j\in\mathbb{Z}_{*}, j\neq k}\int_{0}^{\infty} |k|^{r} \frac{|\hat{u}(k-j,s)|}{|k-j|}
|j| |\hat{v}(j,s)|\ \mathrm{d}s.
\end{multline}
We then adjust factors of $j,$ $k,$ and $k-j,$ to find
\begin{multline}\nonumber
\|B_{adv}(u,v)\|_{\mathcal{PM}^{r}}\leq
\\
\sup_{k\in\mathbb{Z}_{*}}\sum_{j\in\mathbb{Z}_{*},j\neq k}\int_{0}^{\infty}
\frac{|k|^{r}}{|k-j|^{1+r}|j|^{r+\sigma-1}}|k-j|^{r}|\hat{u}(k-j,s)||j|^{r+\sigma}|\hat{v}(j,s)|\ \mathrm{d}s
\\
\leq \|u\|_{\mathcal{PM}^{r}}\|v\|_{\mathcal{Z}^{r+\sigma}}\left(\sup_{k\in\mathbb{Z}_{*}}\sum_{j\in\mathbb{Z}_{*},j\neq k}
\frac{|k|^{r}}{|k-j|^{1+r}|j|^{r+\sigma-1}}\right).
\end{multline}
We need this last quantity on the right-hand side to be finite; we postpone the proof of this to Lemma \ref{finalLemma}
below.  Given this lemma, we have completed our bound of $B_{adv}(u,v)$ in the space $\mathcal{PM}^{r}.$

Finally, we turn to the estimate for $B_{adv}(u,v)$ in the space $\mathcal{Z}^{r+\sigma}.$  Writing the norm of $B_{adv}(u,v)$
in this space, after some elementary considerations, we arrive at
\begin{multline}\nonumber
\|B_{adv}(u,v)\|_{\mathcal{Z}^{r+\sigma}} =
\\
\sup_{k\in\mathbb{Z}_{*}}\int_{0}^{\infty}\left| |k|^{r+\sigma}
\sum_{j\in\mathbb{Z}_{*},j\neq k}\int_{0}^{t}e^{-(t-s)(\nu|k|^{\sigma}+i\bar{\omega}\mathrm{sgn}(k))}
\frac{\hat{u}(k-j,s)}{|k-j|}ij\hat{v}(j,s)\ \mathrm{d}s\right| \mathrm{d}t
\\
\leq \sup_{k\in\mathbb{Z}_{*}}\sum_{j\in\mathbb{Z}_{*},j\neq k}|k|^{r+\sigma}\int_{0}^{\infty}\int_{0}^{t}
e^{-(t-s)\nu|k|^{\sigma}}\frac{|\hat{u}(k-j,s)|}{|k-j|}|j||\hat{v}(j,s)|\ \mathrm{d}s\mathrm{dt}.
\end{multline}
As before, we change the order of integration and carry out the integration with respect to $t.$  This gives
the bound
\begin{equation}\label{toBeSeenAgainAgain}
\|B_{adv}(u,v)\|_{\mathcal{Z}^{r+\sigma}} \leq \frac{1}{\nu}
\sup_{k\in\mathbb{Z}_{*}}\sum_{j\in\mathbb{Z}_{*},j\neq k}\int_{0}^{\infty}
|k|^{r}\frac{|\hat{u}(k-j,s)|}{|k-j|}|j||\hat{v}(j,s)|\ \mathrm{d}s.
\end{equation}
The right-hand side here is the same as the right-hand side in \eqref{toBeSeenAgain}, which we have already bounded.
This completes the proof.
\end{proof}

We now prove the final lemma.
\begin{lemma}\label{finalLemma}
Let $\sigma>1$ and $r\in\left(\frac{1-\sigma}{2},0\right]$ be given.  Then
\begin{equation}\label{finalLemmaQuantity}
\sup_{k\in\mathbb{Z}_{*}}\sum_{j\in\mathbb{Z}_{*},j\neq k}
\frac{|k|^{r}}{|k-j|^{1+r}|j|^{r+\sigma-1}} < \infty.
\end{equation}
\end{lemma}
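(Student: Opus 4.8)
The plan is to fix $k\in\mathbb{Z}_{*}$ and split the $j$–sum according to the size of $|j|$ relative to $|k|$, reducing each piece to a one–parameter sum of the form $\sum|m|^{-p}$ whose growth in $|k|$ is read off from the exponent $p$. First I record the only inequalities among the parameters that will be used. From $r>\frac{1-\sigma}{2}$ one gets $2r+\sigma>1$; combining with $r\le0$ gives $r+\sigma-1>\frac{\sigma-1}{2}>0$ (so the weight $|j|^{-(r+\sigma-1)}$ has positive exponent), and also $r+\sigma=(2r+\sigma)-r>1-r\ge1$, i.e.\ $r+\sigma>1$. Together with $\sigma>1$, these three facts suffice.

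Fix $k$ and decompose $\mathbb{Z}_{*}\setminus\{k\}$ into $A_{1}=\{j:0<|j|\le|k|/2\}$, $A_{2}=\{j:|j|\ge2|k|\}$, and $A_{3}=\{j:|k|/2<|j|<2|k|,\ j\neq k\}$. On $A_{1}$ we have $|k-j|\sim|k|$, so the partial sum is $\lesssim|k|^{-1}\sum_{1\le|j|\le|k|/2}|j|^{-(r+\sigma-1)}$; since $r+\sigma-1>0$ this last sum grows at most polynomially of degree $\max\{0,2-r-\sigma\}$ (up to a logarithm in the borderline case $r+\sigma=2$), and after multiplying by $|k|^{-1}$ and using $r+\sigma>1$ this contribution tends to $0$ as $|k|\to\infty$. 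On $A_{2}$ we have $|k-j|\sim|j|$, hence $|k-j|^{-(1+r)}\lesssim|j|^{-(1+r)}$ regardless of the sign of $1+r$, and the contribution is $\lesssim|k|^{r}\sum_{|j|\ge2|k|}|j|^{-(2r+\sigma)}\lesssim|k|^{r}|k|^{1-2r-\sigma}=|k|^{1-r-\sigma}$, which again tends to $0$ since $r+\sigma>1$.

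The delicate piece is $A_{3}$, where $|j|\sim|k|$, so $|j|^{-(r+\sigma-1)}\sim|k|^{-(r+\sigma-1)}$; writing $m=k-j$, a nonzero integer with $|m|\le3|k|$, the contribution is $\lesssim|k|^{r}|k|^{-(r+\sigma-1)}\sum_{1\le|m|\le3|k|}|m|^{-(1+r)}=|k|^{1-\sigma}\sum_{1\le|m|\le3|k|}|m|^{-(1+r)}$. The inner sum is $O(|k|^{-r})$ when $r<0$ — this covers also the case $1+r\le0$, which occurs when $\sigma>3$ — and $O(\log|k|)$ when $r=0$, so the contribution is at most a constant times $|k|^{1-\sigma-r}$ (or $|k|^{1-\sigma}\log|k|$), which tends to $0$ since $\sigma>1$ and $r+\sigma>1$. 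Adding the three pieces, and noting that all the above estimates hold for every $k\in\mathbb{Z}_{*}$ with implied constants depending only on $r$ and $\sigma$, shows that the $j$–sum is bounded uniformly in $k$; hence the supremum in \eqref{finalLemmaQuantity} is finite.

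The main obstacle is region $A_{3}$. There $|k-j|$ can be as small as $1$ while the exponent $1+r$ lies in $\left(\frac{3-\sigma}{2},1\right]$ and can be non-positive once $\sigma>3$, so $\sum_{1\le|m|\le3|k|}|m|^{-(1+r)}$ genuinely grows like $|k|^{-r}$; the estimate closes only because the prefactor $|k|^{1-\sigma}$ produced by the localization $|j|\sim|k|$ beats this growth, which is exactly where $\sigma>1$ and $r+\sigma>1$ enter. The contributions of $A_{1}$ and $A_{2}$ are the routine low–high and high–high parts of a convolution estimate, and one could alternatively treat $A_{1}\cup A_{3}$ using the inequality $|k-j|^{-r}\le c(|k|^{-r}+|j|^{-r})$ from the appendix of \cite{ALSS}.
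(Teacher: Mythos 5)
Your argument is correct: the parameter inequalities you extract ($2r+\sigma>1$, $r+\sigma>1$, $r+\sigma-1>\frac{\sigma-1}{2}>0$) are exactly what is needed, and each of the three regional estimates checks out, including the careful handling of the sign of $1+r$ in the near-diagonal region. The strategy is the same as the paper's --- a region-by-region decomposition of the convolution-type sum with elementary comparabilities --- but the decompositions differ in detail. The paper first disposes of the case $r<-1$ via the subadditivity inequality $|k-j|^{-(1+r)}\leq c\bigl(|k|^{-(1+r)}+|j|^{-(1+r)}\bigr)$, and only for $r\geq-1$ splits the sum (taking $k>0$) into $j<0$, $1\leq j\leq k/2$, $k/2<j<k$, and $j>k$, using in each piece a pointwise comparison such as $|k-j|\geq|j|$ or $1/j<1/(k-j)$; the bound $|k-j|^{-(1+r)}\leq|j|^{-(1+r)}$ used there forces the restriction $1+r\geq0$ and hence the case split. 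Your dyadic-style split $|j|\leq|k|/2$, $|j|\geq2|k|$, $|j|\sim|k|$ uses two-sided comparabilities ($|k-j|\sim|k|$, resp.\ $|k-j|\sim|j|$) that are insensitive to the sign of $1+r$, and in the diagonal region you sum $|m|^{-(1+r)}$ over $|m|\leq3|k|$ directly, so all $r\leq0$ are handled uniformly; the price is that your bounds are asymptotic in $|k|$ rather than termwise comparisons to a single convergent series, but since the implied constants depend only on $r$ and $\sigma$ this is harmless. A minor bonus of your version is that it exhibits the sum as $o(1)$ as $|k|\to\infty$, slightly more than the uniform boundedness the lemma asserts.
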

\begin{proof}
Let $k\in\mathbb{Z}_{*}$ be given; without loss of generality, we may assume $k>0.$

We consider two cases.  First, if $r<-1,$ then there exists $c>0$ such that $|k-j|^{-(1+r)}\leq c(|k|^{-(1+r)}+|j|^{-(1+r)})$
(as proved in the appendix of \cite{ALSS}).
Then we have the estimate
\begin{equation*}
\sum_{j\in\mathbb{Z}_{*},j\neq k}\frac{|k|^{r}}{|k-j|^{1+r}|j|^{r+\sigma-1}}
\leq
c \sum_{j\in\mathbb{Z}_{*},j\neq k}\frac{|k|^{r}|k|^{-(1+r)}}{|j|^{r+\sigma-1}}
+
c \sum_{j\in\mathbb{Z}_{*},j\neq k}\frac{|k|^{r}|j|^{-(1+r)}}{|j|^{r+\sigma-1}}.
\end{equation*}
We simplify, and we estimate $|k|^{-1}\leq1,$ finding
\begin{equation*}
\sum_{j\in\mathbb{Z}_{*},j\neq k}\frac{|k|^{r}}{|k-j|^{1+r}|j|^{r+\sigma-1}}
\leq
c \sum_{j\in\mathbb{Z}_{*},j\neq k}\frac{1}{|j|^{r+\sigma-1}}
+
c \sum_{j\in\mathbb{Z}_{*},j\neq k}\frac{1}{|j|^{2r+\sigma}}.
\end{equation*}
Our conditions on $r$ imply $r+\sigma-1>-r>1,$ and also $2r+\sigma>1.$
Therefore these sums converge, and are clearly bounded independently of $k.$

We next consider $r\geq-1.$
We decompose the relevant sum as $I+II+III+IV,$ where
\begin{equation*}
I=\sum_{j=-\infty}^{-1}\frac{|k|^{r}}{|k-j|^{1+r}|j|^{r+\sigma-1}},\qquad II=\sum_{j=1}^{k/2}\frac{|k|^{r}}{|k-j|^{1+r}|j|^{r+\sigma-1}},
\end{equation*}
\begin{equation*}
III=\sum_{j=\frac{k}{2}+1}^{k-1}\frac{|k|^{r}}{|k-j|^{1+r}|j|^{r+\sigma-1}},\qquad
IV=\sum_{j=k+1}^{\infty}\frac{|k|^{r}}{|k-j|^{1+r}|j|^{r+\sigma-1}}.
\end{equation*}
For $I,$ we have $|k-j|\geq|j|.$  Keeping in mind $r\leq0$ and $\sigma+2r>1,$ we find
\begin{equation*}
I\leq\sum_{j\in\mathbb{Z}_{*}}\frac{1}{|j|^{\sigma+2r}},
\end{equation*}
and this sum converges and is clearly independent of $k.$
For $II,$ we have $|k-j|=k-j\geq \frac{k}{2}\geq j.$  Then we again have
\begin{equation*}
II\leq \sum_{j\in\mathbb{Z}_{*}}\frac{1}{|j|^{\sigma+2r}}.
\end{equation*}

For $III,$ we have $|k-j|=k-j,$ and $j>k-j,$ so $\frac{1}{j}<\frac{1}{k-j}.$  This implies the bound
\begin{equation*}
III=k^{r}\sum_{j=\frac{k}{2}+1}^{k-1}\frac{1}{(k-j)^{\sigma+2r}}.
\end{equation*}
Changing variables to $\ell=k-j,$ this becomes
\begin{equation*}
III=k^{r}\sum_{\ell=1}^{\frac{k}{2}-1}\frac{1}{\ell^{\sigma+2r}}\leq\sum_{\ell\in\mathbb{Z}_{*}}\frac{1}{|\ell|^{\sigma+2r}}.
\end{equation*}
Again, this converges and is independent of $k.$

For $IV,$ we use $|k-j|=j-k,$ and we change variables to $\ell=j-k.$
This gives the expression
\begin{equation*}
IV=k^{r}\sum_{\ell=1}^{\infty}\frac{1}{\ell^{1+r}(\ell+k)^{r+\sigma-1}}.
\end{equation*}
Since $k>0,$ we have the estimate $\ell+k>\ell,$ leading to the bound
\begin{equation*}
IV\leq k^{r}\sum_{\ell=1}^{\infty}\frac{1}{\ell^{\sigma+2r}}.
\end{equation*}
With our conditions on $r,$ this is again bounded independently of $k.$
This completes the proof of the lemma.
\end{proof}

\begin{remark} Whether the above argument does also give a result for $\sigma=1$ hinges on whether a version of Lemma
\ref{finalLemma} holds for $r>0.$   Clearly \eqref{finalLemmaQuantity} does not hold for $\sigma=1$ and $r=0.$
To verify \eqref{finalLemmaQuantity} for $\sigma=1$ and $r>0,$ we must consider sums of the form (with $k$ taken to be positive
without loss of generality)
\begin{equation*}
k^{r}\sum_{j=1}^{k-1}\frac{1}{(k-j)^{1+r}j^{r}}.
\end{equation*}
That this is bounded uniformly with respect to $k$ can be straightforwardly shown for some simple values of $r$ such
as $r=1$ or $r=1/2.$  It appears to also be true for general $r>0,$
but fully exploring this is beyond the scope of the
present work.
\end{remark}

\subsection{Analyticity of the solutions}\label{analyticitySection}

The solutions of the generalized Constantin-Lax-Majda equation which we have shown to exist are in fact
analytic at all positive times.  This is the content of the next theorem.

\begin{theorem}\label{analyticityTheorem} Let $a \in \mathbb{R}$. Assume either:

\vspace{.1cm}

\begin{enumerate}
\item[(H1)] $\sigma\geq 1$ is given, or
\item[(H2)] $\sigma>1$ and $r\in\left(\frac{1-\sigma}{2},0\right]$ are given.
\end{enumerate}

\vspace{.3cm}

Let $\alpha \in (0,1)$. Then there exists $\varepsilon> 0$ such that,

\vspace{.1cm}

\begin{enumerate}

\item if $\mathrm{(H1)}$  holds then, for any $\bar{\omega}\in\mathbb{R}$ and $\omega_{0}\in Y^{-\sigma/2}$
satisfying $\|\omega_{0}\|_{Y^{-\sigma/2}}<\varepsilon,$ the solution $\tilde{\omega}=\bar{\omega}+\omega,$
with initial data $\tilde{\omega}_{0}=\bar{\omega}+\omega_{0},$ and with
$\omega\in\mathcal{Y}^{-\sigma/2}\cap\mathcal{X}^{\sigma/2}$, of \eqref{gCLMd}
given by Theorem \ref{CLMAdvectionXYTheorem} is analytic, and,

\item if $\mathrm{(H2)}$ holds then, for any $\bar{\omega}\in\mathbb{R}$ and $\omega_{0}\in PM^{r}$
satisfying $\|\omega_{0}\|_{PM^{r}}<\varepsilon,$ the solution $\tilde{\omega}=\bar{\omega}+\omega,$ with initial data $\tilde{\omega}_{0}=\bar{\omega}+\omega_{0},$ and with  $\omega\in\mathcal{PM}^{r}\cap\mathcal{Z}^{r+\sigma}$, of \eqref{gCLMd} given by Theorem \ref{ExistsAdvPMr} is analytic.
\end{enumerate}

In both cases the radius of analyticity $R_\alpha$ satisfies $R_\alpha \geq \max\{\nu t^{1/\sigma} , \alpha \nu t  \}$.
\end{theorem}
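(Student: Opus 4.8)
\emph{Proof proposal.} The plan is to obtain the two lower bounds in $R_\alpha\geq\max\{\nu t^{1/\sigma},\alpha\nu t\}$ separately, each time by rerunning the relevant fixed-point argument in an exponentially weighted (Gevrey-type) version of the spaces already in use. Given a nondecreasing $\beta\colon[0,\infty)\to[0,\infty)$ with $\beta(0)=0$, let $\mathcal{Y}^{-\sigma/2}_{\beta}$, $\mathcal{X}^{\sigma/2}_{\beta}$, $\mathcal{PM}^{r}_{\beta}$, $\mathcal{Z}^{r+\sigma}_{\beta}$ denote the spaces obtained from $\mathcal{Y}^{-\sigma/2}$, $\mathcal{X}^{\sigma/2}$, $\mathcal{PM}^{r}$, $\mathcal{Z}^{r+\sigma}$ by inserting the weight $e^{\beta(t)|k|}$ inside the supremum or integral over $t$. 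These are Banach spaces, they embed continuously into the corresponding unweighted spaces (since $e^{\beta(t)|k|}\geq1$), and a function lying at time $t$ in $\mathcal{Y}^{-\sigma/2}_{\beta}$ (respectively $\mathcal{PM}^{r}_{\beta}$, etc.) extends holomorphically to the complex strip of half-width $\beta(t)$ and hence has radius of analyticity at least $\beta(t)$ there. I would therefore prove that the solution $\omega$ produced by Theorem \ref{CLMAdvectionXYTheorem} in case (H1), respectively by Theorem \ref{ExistsAdvPMr} in case (H2), in fact lies in these weighted spaces for the two choices $\beta(t)=\alpha\nu t$ and $\beta(t)=\nu t^{1/\sigma}$; at each fixed $t$ the larger of the two admissible weights then delivers the stated maximum.

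\emph{The bound $R_\alpha\geq\alpha\nu t$.} Here the claim is that, for $\alpha\in(0,1)$ and $\beta(t)=\alpha\nu t$, every estimate in the proofs of Theorems \ref{CLMAdvectionXYTheorem} and \ref{ExistsAdvPMr} survives insertion of the weight, up to constants. Two elementary facts do the work. First, the weight is subadditive in frequency: $e^{\alpha\nu t|k|}\leq e^{\alpha\nu t|k-j|}e^{\alpha\nu t|j|}$, because $|k|\leq|k-j|+|j|$. Second, in the Duhamel integral one writes $t=(t-s)+s$ and uses $|k|\leq|k-j|+|j|$ to bound
\[
e^{\alpha\nu t|k|}\leq e^{\alpha\nu(t-s)|k|}\,e^{\alpha\nu s|k-j|}\,e^{\alpha\nu s|j|},
\]
after which $e^{\alpha\nu(t-s)|k|}$ is absorbed by the dissipative factor $e^{-\nu(t-s)|k|^{\sigma}}$ coming from the semigroup, since $\alpha\nu|k|-\nu|k|^{\sigma}=\nu|k|(\alpha-|k|^{\sigma-1})\leq\nu|k|(\alpha-1)<0$ for all $|k|\geq1$; this is where $\sigma\geq1$ and $\alpha<1$ enter. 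The semigroup bounds degrade only by bounded factors: $\sup_{t\geq0}e^{\alpha\nu t|k|-\nu t|k|^{\sigma}}$ is attained at $t=0$, and $\int_{0}^{\infty}e^{(\alpha\nu|k|-\nu|k|^{\sigma})t}\,\mathrm{d}t\leq\bigl((1-\alpha)\nu|k|^{\sigma}\bigr)^{-1}$, precisely matching the weights $|k|^{\sigma/2}$ and $|k|^{r+\sigma}$ of $\mathcal{X}^{\sigma/2}$ and $\mathcal{Z}^{r+\sigma}$. Consequently $\mathcal{B}$ is bounded on $\mathcal{Y}^{-\sigma/2}_{\alpha}\cap\mathcal{X}^{\sigma/2}_{\alpha}$ (respectively $\mathcal{PM}^{r}_{\alpha}\cap\mathcal{Z}^{r+\sigma}_{\alpha}$), with a constant differing from the unweighted one only by factors $(1-\alpha)^{-1}$, and the weighted semigroup datum has norm $\lesssim\|\omega_{0}\|_{Y^{-\sigma/2}}$ (respectively $\lesssim\|\omega_{0}\|_{PM^{r}}$); after possibly shrinking $\varepsilon$, Lemma \ref{fixedpoint} applies in the weighted space, and its uniqueness clause together with uniqueness in the unweighted space identifies the weighted fixed point with $\omega$. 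This yields $R_\alpha\geq\alpha\nu t$.

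\emph{The bound $R_\alpha\geq\nu t^{1/\sigma}$.} The same scheme works with $\beta(t)=\nu t^{1/\sigma}$, now using the concavity of $r\mapsto r^{1/\sigma}$ and the one-variable estimate $c_{\sigma}:=\sup_{x>0}(x-x^{\sigma})=(1-\tfrac{1}{\sigma})\,\sigma^{-1/(\sigma-1)}<\infty$. In place of the displayed absorption above one uses, via $t^{1/\sigma}\leq(t-s)^{1/\sigma}+s^{1/\sigma}$ and then $x=(t-s)^{1/\sigma}|k|$,
\[
\bigl(\beta(t)-\beta(s)\bigr)|k|-\nu(t-s)|k|^{\sigma}\leq\nu\bigl((t-s)^{1/\sigma}|k|-(t-s)|k|^{\sigma}\bigr)\leq\nu c_{\sigma},
\]
so the surplus exponential is bounded by the fixed constant $e^{\nu c_{\sigma}}$, uniformly in $k,j,s,t$. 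The same one-variable bound gives $\sup_{t\geq0}e^{\nu t^{1/\sigma}|k|-\nu t|k|^{\sigma}}\leq e^{\nu c_{\sigma}}$, and the substitution $t=w|k|^{-\sigma}$ gives $\int_{0}^{\infty}e^{\nu t^{1/\sigma}|k|-\nu t|k|^{\sigma}}\,\mathrm{d}t=|k|^{-\sigma}\int_{0}^{\infty}e^{\nu(w^{1/\sigma}-w)}\,\mathrm{d}w$, which is finite exactly because $\sigma>1$. With these inputs the bilinear and semigroup estimates of Theorems \ref{CLMAdvectionXYTheorem} and \ref{ExistsAdvPMr} go through in $\mathcal{Y}^{-\sigma/2}_{\beta}\cap\mathcal{X}^{\sigma/2}_{\beta}$ and $\mathcal{PM}^{r}_{\beta}\cap\mathcal{Z}^{r+\sigma}_{\beta}$ with all constants independent of time, so Lemma \ref{fixedpoint} again places $\omega$ in these spaces, giving $R_\alpha\geq\nu t^{1/\sigma}$. (When $\sigma=1$ this term coincides with $\nu t$, which is recovered instead from the previous step by letting $\alpha\uparrow1$ in the supremum defining the radius of analyticity; here the relevant $\mathcal{X}$- and $\mathcal{Z}$-integrals diverge for $\sigma=1$, which is why this separate argument is restricted to $\sigma>1$.)

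\emph{Main obstacle.} The one step I would write out carefully is the stability of the bilinear estimates under the weighting: one must verify that reducing the two input weights down to the single factor $e^{-\beta(s)|k|}$ through $|k-j|+|j|\geq|k|$, and then absorbing $e^{(\beta(t)-\beta(s))|k|}$ into $e^{-\nu(t-s)|k|^{\sigma}}$, leaves behind exactly the summations over $j$ and the integrations in $t$ already performed in Sections \ref{CLMSection} and \ref{gCLMSection}, so that the elementary frequency inequalities there still close the argument. Since the leftover exponential factor is bounded by a constant independent of $k,j,s,t$ (equal to $1$ for $\beta(t)=\alpha\nu t$ and to $e^{\nu c_{\sigma}}$ for $\beta(t)=\nu t^{1/\sigma}$), no new combinatorial input is needed; the only genuinely new computation is the convergence of $\int_{0}^{\infty}e^{\nu(w^{1/\sigma}-w)}\,\mathrm{d}w$ for $\sigma>1$, which is precisely what makes the semigroup land in the weighted $\mathcal{X}$- or $\mathcal{Z}$-space.
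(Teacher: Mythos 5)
Your proposal is correct and follows essentially the same route as the paper: working in exponentially weighted spaces with weight $e^{\beta(t)|k|}$ is equivalent to the paper's substitution $W=e^{\nu b(t)\Lambda}\omega$, and your two absorption inequalities (semigroup and Duhamel, via $|k|\leq|k-j|+|j|$ and subadditivity of $b$) are precisely the paper's Claims \ref{Claim1} and \ref{Claim2} used in exactly the same places. The one repair needed is in the $b(t)=t^{1/\sigma}$ case: bounding $e^{(\beta(t)-\beta(s))|k|}e^{-\nu(t-s)|k|^{\sigma}}$ merely by the constant $e^{\nu c_{\sigma}}$ discards the dissipation that the $\mathcal{X}^{\sigma/2}$ and $\mathcal{Z}^{r+\sigma}$ estimates must still integrate in $t$ to produce the factor $|k|^{-\sigma}$; you should instead retain half of it, using $\sup_{x>0}(x-x^{\sigma}/2)<\infty$ to get a bound of the form $C_{2}e^{-\nu(t-s)|k|^{\sigma}/2}$, which is exactly how the paper's Claim \ref{Claim2} is stated.
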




The rest of this section will be devoted to the proof of Theorem \ref{analyticityTheorem}.  We first need two auxiliary claims, which we now state. Let $\sigma \geq 1$ and $\alpha \in (0,1)$.

\begin{claim} \label{Claim1}
\begin{itemize}
  \item[]
  \item There exists a constant $C_1>0$ such that, if $b(t) = t^{1/\sigma}$, then \[e^{\nu t^{1/\sigma} |k|}e^{ - \nu t|k|^\sigma} \leq C_1e^{-\nu t|k|^\sigma/2},\] for all $t>0$ and all $k \in \mathbb{Z}_\ast$.
  \item If $b(t) = \alpha t$ then \[e^{\nu \alpha t |k|}e^{ - \nu t|k|^\sigma} \leq e^{ - (1-\alpha)\nu t|k|^\sigma},\] for all $t>0$ and all $k \in \mathbb{Z}_\ast$.
\end{itemize}

\end{claim}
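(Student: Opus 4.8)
The plan is to verify both inequalities by passing to exponents, and for the first one by exploiting a scaling substitution that collapses the two free variables $t$ and $k$ into a single one.

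For the first bullet, after taking logarithms it suffices to produce a finite $C_{1}$, independent of $t>0$ and $k\in\mathbb{Z}_{*}$, for which
\[
\nu t^{1/\sigma}|k| - \tfrac{\nu}{2}\, t|k|^{\sigma} \le \log C_{1}.
\]
Setting $x = t^{1/\sigma}|k|\ge 0$, so that $t|k|^{\sigma}=x^{\sigma}$, the left-hand side becomes $h(x):=\nu x - \tfrac{\nu}{2}x^{\sigma}$, a one-variable expression. The function $h$ is continuous on $[0,\infty)$ with $h(0)=0$, and $h(x)\to-\infty$ as $x\to\infty$ because the dissipative power $x^{\sigma}$ dominates the linear term (this is where one uses $\sigma>1$ strictly; when $\sigma=1$ the first bound is unavailable and one falls back on the second bullet). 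Hence $h$ attains a finite maximum over $[0,\infty)$, at the interior critical point $x_{*}=(2/\sigma)^{1/(\sigma-1)}$, and one may take $C_{1}=\exp\!\big(\nu(1-1/\sigma)(2/\sigma)^{1/(\sigma-1)}\big)$. The substitution is the whole point: it makes transparent that $C_{1}$ depends only on $\nu$ and $\sigma$, not on $t$ or $k$.

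For the second bullet no substitution is needed. Rearranging exponents, the claimed inequality $e^{\nu\alpha t|k|}e^{-\nu t|k|^{\sigma}}\le e^{-(1-\alpha)\nu t|k|^{\sigma}}$ is equivalent to $\nu\alpha t|k|\le \alpha\nu t|k|^{\sigma}$, i.e.\ to $|k|\le|k|^{\sigma}$, which holds for every $k\in\mathbb{Z}_{*}$ since $|k|\ge 1$ and $\sigma\ge 1$. Since both parts thus reduce to elementary one-variable facts, there is no serious obstacle; the only care required is to organize the first reduction so that $C_{1}$ is visibly free of $t$ and $k$ (which the scaling $x=t^{1/\sigma}|k|$ achieves), and in the second to keep straight the roles of $\alpha$, $\sigma$, and the integrality constraint $|k|\ge 1$.
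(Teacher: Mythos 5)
Your verification is correct, and both computations check out: the substitution $x=t^{1/\sigma}|k|$ reduces the first bound to maximizing $h(x)=\nu x-\tfrac{\nu}{2}x^{\sigma}$ on $[0,\infty)$, with maximum $\nu(1-1/\sigma)(2/\sigma)^{1/(\sigma-1)}$ at $x_{*}=(2/\sigma)^{1/(\sigma-1)}$, and the second bound reduces to $|k|\le|k|^{\sigma}$. The paper itself does not prove Claim \ref{Claim1}; it cites the analogous computations in \cite[Claim 1]{ALN6} and \cite[(20) and (23)]{ALN4}, which are in the same spirit as what you did (absorbing the lower-order exponential gain into half of the dissipation), so your argument is a self-contained version of the standard one rather than a genuinely different route. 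One point worth emphasizing: you correctly observe that the first bullet requires $\sigma>1$ strictly --- at $\sigma=1$ the left-hand side equals $1$ while the right-hand side decays, so no finite $C_{1}$ works. Since the claim is stated under the standing hypothesis $\sigma\ge 1$, this is a genuine (if minor) defect in the statement as written; in the application it is harmless because for $\sigma=1$ one uses only the second bullet (with $b(t)=\alpha t$), and the radius-of-analyticity conclusion $R_{\alpha}\ge\max\{\nu t^{1/\sigma},\alpha\nu t\}$ should then be read as coming from the $\alpha\nu t$ branch alone. Flagging this restriction, as you did, is the right call.
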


\begin{claim} \label{Claim2}

\begin{itemize}
  \item[]
  \item There exists a constant $C_2>0$ such that, if $b(t) = t^{1/\sigma}$ then
  \[e^{\nu t^{1/\sigma} |k|}e^{ - \nu s^{1/\sigma} |k|}e^{ - \nu (t-s)|k|^\sigma} \leq C_2e^{-\nu (t-s)|k|^\sigma/2},\]
  for all $0<s<t$ and all $k \in \mathbb{Z}_\ast$.
  \item If $b(t) = \alpha t$ then \[e^{\nu \alpha t |k|}e^{ - \nu \alpha s |k|}e^{- \nu (t-s)|k|^\sigma} \leq e^{ - (1-\alpha)\nu (t-s)|k|^\sigma},\] for all  $0<s<t$ and all $k \in \mathbb{Z}_\ast$.
\end{itemize}

\end{claim}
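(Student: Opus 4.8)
The plan is to verify both inequalities in Claim \ref{Claim2} (and, with $s$ replaced by $0$, both inequalities in Claim \ref{Claim1}) by elementary real-variable estimates, reducing each to a bound on a single scalar quantity. Throughout, I would use that $k\in\mathbb{Z}_{\ast}$ forces $|k|\geq1$ and that $\sigma\geq1$, so that $|k|\leq|k|^{\sigma}$.

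For the second bullet the argument is immediate: after combining the first two exponentials, the exponent equals $\nu\alpha(t-s)|k|-\nu(t-s)|k|^{\sigma}$, and since $|k|\leq|k|^{\sigma}$ we have $\nu\alpha(t-s)|k|\leq\alpha\nu(t-s)|k|^{\sigma}$; hence the exponent is at most $-(1-\alpha)\nu(t-s)|k|^{\sigma}$, which is precisely the asserted bound, with constant $1$. The second bullet of Claim \ref{Claim1} is the special case $s=0$.

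The substance lies in the first bullet. Its exponent is $\nu(t^{1/\sigma}-s^{1/\sigma})|k|-\nu(t-s)|k|^{\sigma}$, so it suffices to bound $\nu(t^{1/\sigma}-s^{1/\sigma})|k|-\tfrac12\nu(t-s)|k|^{\sigma}$ from above by a constant independent of $s,t,k$. The key elementary input is that $x\mapsto x^{1/\sigma}$ is concave on $[0,\infty)$ and vanishes at $0$, hence is subadditive, so $t^{1/\sigma}-s^{1/\sigma}\leq(t-s)^{1/\sigma}$ whenever $t>s>0$. Putting $w:=(t-s)|k|^{\sigma}>0$, the quantity to be controlled is then at most $h(w):=\nu w^{1/\sigma}-\tfrac12\nu w$. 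For $\sigma>1$ we have $1/\sigma<1$, so $h$ extends continuously to $[0,\infty)$ with $h(0)=0$ and $h(w)\to-\infty$ as $w\to\infty$; therefore $M:=\sup_{w>0}h(w)<\infty$ (the maximum being attained at $w=(\sigma/2)^{\sigma/(1-\sigma)}$), and we may take $C_{2}=e^{M}$. The first bullet of Claim \ref{Claim1} is again the case $s=0$, i.e.\ $w=t|k|^{\sigma}$, so one may take $C_{1}=C_{2}$.

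The one point I would flag as the genuine obstacle, rather than a routine computation, is the borderline exponent $\sigma=1$ in the first bullets: there $x^{1/\sigma}=x$, so $h(w)=\tfrac12\nu w$ is unbounded and the first-bullet inequalities can hold only for $\sigma>1$. This is harmless for the intended application: in the proof of Theorem \ref{analyticityTheorem} the choice $b(t)=t^{1/\sigma}$ is what yields the improved radius $\nu t^{1/\sigma}$ for small $t$ only when $\sigma>1$, whereas the radius $\alpha\nu t$ comes from the $b(t)=\alpha t$ bullets, which are valid for every $\sigma\geq1$.
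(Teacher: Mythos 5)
Your proof is correct, and since the paper itself omits the proof of Claims \ref{Claim1} and \ref{Claim2} (deferring to the computations in \cite{ALN4} and \cite{ALN6}), your write-up is a legitimate self-contained substitute. The route you take --- combine the exponents, use $|k|\le|k|^{\sigma}$ for the linear-in-$t$ weight, and for the $t^{1/\sigma}$ weight use subadditivity of the concave function $x\mapsto x^{1/\sigma}$ to reduce everything to the single scalar $w=(t-s)|k|^{\sigma}$ and then maximize $h(w)=\nu w^{1/\sigma}-\tfrac12\nu w$ --- is the standard argument and is essentially what the cited computations do; all the individual steps check out, including the location of the maximizer and the derivation of Claim \ref{Claim1} as the $s=0$ case.

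Your flag about $\sigma=1$ is not a defect of your proof but a genuine observation about the claim as stated: for $\sigma=1$ the left-hand side of the first bullet equals $e^{\nu(t-s)|k|}e^{-\nu(t-s)|k|}=1$ exactly, while the right-hand side decays, so no constant $C_{2}$ can work, and the first bullets of both claims genuinely require $\sigma>1$ even though they are stated under the standing hypothesis $\sigma\ge1$. This propagates into Theorem \ref{analyticityTheorem}: under (H1) with $\sigma=1$ only the $b(t)=\alpha t$ bullets are available, so the radius bound there should read $R_{\alpha}\ge\alpha\nu t$ rather than $\max\{\nu t^{1/\sigma},\alpha\nu t\}$ (the latter max equals $\nu t$ when $\sigma=1$, which is not justified). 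This is worth recording as a correction to the hypotheses of the claims, or as a caveat in the statement of the theorem.
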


The proof of Claim \ref{Claim1} is a slight adaptation of the proof of \cite[Claim 1]{ALN6}, given in
\cite[(20) and (23)]{ALN4}. Similarly, the proof of Claim \ref{Claim2} is a slight adaptation of the proof of \cite[Claim 2]{ALN6}, given in \cite[(21) and (25)]{ALN4}. We therefore omit the proof of Claims \ref{Claim1} and \ref{Claim2}.

\begin{proof}[Proof of Theorem \ref{analyticityTheorem}]  The proof of items (1) and (2) of the theorem are very similar, and we only prove (2).
The result is obtained by proving that $e^{\nu b(t)\Lambda}\omega \in \mathcal{PM}^{r}\cap\mathcal{Z}^{r+\sigma}$  in both cases: $b(t) = t^{1/\sigma}$ and $b(t) = \alpha t$, $0<\alpha <1$.

Now, $\omega$ is a mild solution of \eqref{gCLMd2}, so we may write
\begin{equation}\label{e:expbDw}
 e^{\nu b(t)\Lambda}\omega =  e^{\nu b(t)\Lambda}e^{-t(\nu \Lambda^{\sigma} - \bar{\omega}H)}\omega_0 + e^{\nu b(t)\Lambda}\mathcal{B}(\omega,\omega),
\end{equation}
where $\mathcal{B}=B+aB_{adv}$ was introduced in \eqref{e:calBil}. Let us set
\[W(t):= e^{\nu b(t)\Lambda}\omega\]
and let us express \eqref{e:expbDw} in terms of $W$, so that:
\begin{equation} \label{e:Weqn}
W(t) = e^{\nu b(t)\Lambda}e^{-t(\nu \Lambda^\sigma - \bar{\omega}H)}\omega_0 + e^{\nu b(t)\Lambda}\mathcal{B}(e^{-\nu b(t)\Lambda}W,e^{-\nu b(t)\Lambda}W),
\end{equation}
where we used $W(0)=\omega_0$. We will show that, if $\omega_0$ is sufficiently small in $PM^r$, then there exists a globally-defined $W \in \mathcal{PM}^{r}\cap\mathcal{Z}^{r+\sigma}$ which satisfies \eqref{e:Weqn}. To this end we must show that the linear term is continuous from $PM^r$ to $\mathcal{PM}^{r}\cap\mathcal{Z}^{r+\sigma}$ and that the bilinear term is continuous from $\mathcal{PM}^{r}\cap\mathcal{Z}^{r+\sigma} \times \mathcal{PM}^{r}\cap\mathcal{Z}^{r+\sigma}$ to $\mathcal{PM}^{r}\cap\mathcal{Z}^{r+\sigma}$.

We first examine the linear term, noting that
\[|(e^{\nu b(t)\Lambda}e^{-t(\nu \Lambda^\sigma - \bar{\omega}H)}\omega_0)^\wedge(k,t)| = e^{\nu b(t) |k| - \nu t|k|^\sigma}|\hat{\omega_0}(k)|.
\]

Then, using Claim \ref{Claim1} we can reproduce the same reasoning as in \eqref{e:lintermgCLMd2PMr} and \eqref{e:lintermgCLMd2Zrandsigma} to deduce that, in both cases $b(t)= t^{1/\sigma}$ and $b(t)=\alpha t$,
\begin{equation}\label{e:lintermgCLMd2ANPMr}
  \| e^{\nu b(t)\Lambda}e^{-t(\nu \Lambda - \bar{\omega}H)}\omega_0 \|_{\mathcal{PM}^{r}} \leq C  \| \omega_0 \|_{PM^r},
\end{equation}
and
\begin{equation}\label{e:lintermgCLMd2ANZrandsigma}
 \|e^{\nu b(t)\Lambda}e^{-t(\nu \Lambda - \bar{\omega}H)}\omega_0\|_{\mathcal{Z}^{r+\sigma}} \leq \frac{ C}{\nu} \| \omega_0 \|_{PM^r} .
\end{equation}
This establishes the continuity of the linear term from $PM^r$ to $\mathcal{PM}^{r}\cap\mathcal{Z}^{r+\sigma}$.

Next we need to estimate the bilinear term:
\begin{align*}
e^{\nu b(t)\Lambda}\mathcal{B}(e^{-\nu b(t)\Lambda}U,e^{-\nu b(t)\Lambda}V) & = e^{\nu b(t)\Lambda}B(e^{-\nu b(t)\Lambda}U,e^{-\nu b(t)\Lambda}V) \\
& + e^{\nu b(t)\Lambda} B_{adv}(e^{-\nu b(t)\Lambda}U,e^{-\nu b(t)\Lambda}V).
\end{align*}

Let us analyze each of the two right-hand-side terms separately. First note that, from the definition of $B(u,v)$ in \eqref{CLMd2Bilinear} and from the identity \eqref{CLMd2BilinearYest} together with the definition of the $\mathcal{Y}^s$-norm it follows that
\begin{align*}
(e^{\nu b(t)\Lambda}& B(e^{-\nu b(t)\Lambda}U,e^{-\nu b(t)\Lambda}V))^{\wedge}(k,t)
=
\int_{0}^{t}
e^{\nu b(t) |k|} e^{-(t-s)(\nu|k|^{\sigma}+i\bar{\omega}\mathrm{sgn}(k))} \\
& \left( \sum_{j\in\mathbb{Z}_{*},j\neq k} e^{-\nu b(s) |k-j|}\hat{U}(k-j,s)
(-i\mathrm{sgn}(j))e^{-\nu b(s) |j|}\hat{V}(j,s)\right)
\ \mathrm{d}s.
\end{align*}
Therefore, from the triangle inequality $-|k-j|-|j|\leq -|k|$ we conclude that
\begin{align*}
 &\left| (e^{\nu b(t)\Lambda} B(e^{-\nu b(t)\Lambda}U,e^{-\nu b(t)\Lambda}V))^{\wedge}(k,t)\right|\\
&\leq \int_{0}^{t} e^{\nu b(t)|k|}e^{-\nu b(s) |k|} e^{-\nu(t-s)|k|^{\sigma}} \left(
\sum_{j\in\mathbb{Z}_{*},j\neq k} | \hat{U}(k-j,s)| |\hat{V}(j,s)| \right) \ \mathrm{d}s.
\end{align*}
Now, using the estimates in Claim \ref{Claim2} and applying the same reasoning as in \eqref{CLMTermPMr} and \eqref{CLMtermZrandsigmaEST} we find, for both $b(t)=t^{1/\sigma}$ and $b(t)=\alpha t$, that
\begin{align}\label{BiltermgCLMd2AN}
&\| e^{\nu b(t)\Lambda} B(e^{-\nu b(t)\Lambda}U,e^{-\nu b(t)\Lambda}V) \|_{\mathcal{PM}^{r}} + \| e^{\nu b(t)\Lambda} B(e^{-\nu b(t)\Lambda}U,e^{-\nu b(t)\Lambda}V) \|_{\mathcal{Z}^{r+\sigma}}\nonumber \\
&\leq C \sup_{k\in\mathbb{Z}_{*}}\sum_{j\in\mathbb{Z}_{*},j\neq k}\int_{0}^{\infty} |k|^{r} |\hat{U}(k-j,s)|
|\hat{V}(k-j,s)|\ \mathrm{d}s,
\end{align}
for some constant $C=C(\nu,\alpha)$. As noted in the proof of Theorem \ref{ExistsAdvPMr}, the right-hand-side above may be estimated by $C\|U\|_{\mathcal{PM}^r}\| V \|_{\mathcal{Z}^{r+\sigma}}$.

Lastly, we address the $B_{adv}$ term. From the definition of $B_{adv}$ given in \eqref{e:advBil} and the identity in \eqref{toBeSeenAgain} it follows that
\begin{align*}
(e^{\nu b(t)\Lambda}& B_{adv}(e^{-\nu b(t)\Lambda}U,e^{-\nu b(t)\Lambda}V))^{\wedge}(k,t)
=
\int_{0}^{t}
e^{\nu b(t) |k|} e^{-(t-s)(\nu|k|^{\sigma}+i\bar{\omega}\mathrm{sgn}(k))} \\
& \left( \sum_{j\in\mathbb{Z}_{*},j\neq k} e^{-\nu b(s) |k-j|}\frac{\hat{U}(k-j,s)}{|k-j|}
e^{-\nu b(s) |j|}ij\hat{V}(j,s)\right)
\ \mathrm{d}s.
\end{align*}
Hence, from the triangle inequality we obtain
\begin{align*}
 &\left| (e^{\nu b(t)\Lambda} B_{adv}(e^{-\nu b(t)\Lambda}U,e^{-\nu b(t)\Lambda}V))^{\wedge}(k,t)\right|\\
&\leq \int_{0}^{t} e^{\nu b(t)|k|}e^{-\nu b(s) |k|} e^{-\nu(t-s)|k|^{\sigma}} \left(
\sum_{j\in\mathbb{Z}_{*},j\neq k}  \frac{|\hat{U}(k-j,s)|}{|k-j|} |j||\hat{V}(j,s)| \right) \ \mathrm{d}s.
\end{align*}
Using again the estimates in Claim \ref{Claim2} and applying the same reasoning as in \eqref{toBeSeenAgain} and in the estimates leading up to \eqref{toBeSeenAgainAgain} we find, for both $b(t)=t^{1/\sigma}$ and $b(t)=\alpha t$, that
\begin{align}\label{BiladvtermgCLMd2AN}
& \| e^{\nu b(t)\Lambda} B_{adv}(e^{-\nu b(t)\Lambda}U,e^{-\nu b(t)\Lambda}V) \|_{\mathcal{PM}^{r}} +
\| e^{\nu b(t)\Lambda}  B_{adv}(e^{-\nu b(t)\Lambda}U,e^{-\nu b(t)\Lambda}V) \|_{\mathcal{Z}^{r+\sigma}} \nonumber \\
& \leq C \sup_{k\in\mathbb{Z}_{*}}\sum_{j\in\mathbb{Z}_{*},j\neq k}\int_{0}^{\infty} |k|^{r} \frac{|\hat{U}(k-j,s)|}{|k-j|} |j||\hat{V}(k-j,s)|\ \mathrm{d}s,
\end{align}
for some constant $C=C(\nu,\alpha)$. Once again recalling the proof of Theorem \ref{ExistsAdvPMr}, the right-hand-side above may be estimated by $C\|U\|_{\mathcal{PM}^r}\| V \|_{\mathcal{Z}^{r+\sigma}}$.

It follows from \eqref{BiltermgCLMd2AN} and \eqref{BiladvtermgCLMd2AN} that the bilinear term is continuous from $\mathcal{PM}^{r}\cap\mathcal{Z}^{r+\sigma} \times \mathcal{PM}^{r}\cap\mathcal{Z}^{r+\sigma}$ to $\mathcal{PM}^{r}\cap\mathcal{Z}^{r+\sigma}$. We may now apply the abstract fixed point result Lemma \ref{fixedpoint} to obtain $e^{b(t)\Lambda} \omega \equiv W \in \mathcal{PM}^{r}\cap\mathcal{Z}^{r+\sigma}$ as long as $\omega_0$ is sufficiently small in $PM^r$.

Since $W\in\mathcal{PM}^{r},$ for any $t$ we have that $W(\cdot,t)\in PM^{r}.$  Thus, we have the bound
\begin{equation*}
e^{\nu b(t)|k|}|\hat{\omega}(k,t)||k|^{r}\leq C_{1},
\end{equation*}
for either choice of $b(t),$ for all $k$ and $t,$ and for $C_{1}=\|W\|_{\mathcal{PM}^{r}}.$  Rearranging this, we get
\begin{equation*}
|\hat{\omega}(k,t)|\leq C_{1}|k|^{-r}e^{-\nu b(t)|k|}\leq C_{2}e^{-\beta|k|},
\end{equation*}
for any $\beta<\nu b(t),$ for some $C_{2}.$
As in Theorem IX.13 of \cite{reedSimon}, this exponential decay rate of the Fourier coefficients
implies that $\omega$ is analytic with radius of analyticity, $R_\alpha,$ with $R_{\alpha}$
satisfying $R_{\alpha}\geq\nu b(t).$  Consequently, the same is true for $\tilde{\omega}$.  This completes the proof of the theorem.
\end{proof}

\section{Discussion}\label{discussionSection}

We now close with some comments.  We have presented two methods for proving the existence and analyticity of solutions
of parabolic equations with low-regularity data, and we have illustrated both methods on problems of current interest.  For
generalized one-dimensional Kuramoto-Sivashinsky equations, we have clarified a point raised by Papageorgiou, Smyrlis, and
Tomlin on the question of whether solutions of \eqref{ottoFamily} are analytic at positive times in a borderline case.  For
generalizations of the Constantin-Lax-Majda equation, we have established global existence of solutions with quite rough data.

The data we may take using the second method is much rougher than the data we may take in the first method.  The advantage
of the first method is that more general nonlinearities may be treated.  While we showed this with a general power nonlinearity
for the generalized one-dimensional Kuramoto-Sivashinsky equation, in other applications such a general nonlinearity arises
in a physically relevant way.  For example the first author in \cite{BLMS} used this method to establish global existence of solutions
for a problem in epitaxial growth of thin films; there the nonlinearity was analytic, i.e. had an infinite power series expansion
(essentially $e^{x}-1-x$).
Indeed, this first method is based on the work of Duchon and Robert for the vortex sheet \cite{duchonRobert}, in which they
used analyticity of the Birkhoff-Rott integral to express it with an infinite power series expansion.
For such problems, our second method does not apply as it requires the nonlinearity be only quadratic.

For the generalized Constantin-Lax-Majda problem, with diffusion parameter $\sigma>1,$ we have used this second method to
prove existence of solutions
with data in the space $Y^{-\sigma/2}$ and with data in the space $PM^{r}$ for $\frac{1-\sigma}{2}<r\leq 0.$  We show now that
these spaces are not comparable (neither is a subset of the other).
Clearly $PM^{r}$ is
not contained in $Y^{-\sigma/2},$ as we could take $\hat{\omega}_{0}(k)=1$ for $k\neq0$ (and $\hat{\omega}_{0}(0)=0$).  Then
$\omega_{0}\in PM^{0}\subseteq PM^{r},$ but $\omega_{0}\notin Y^{-\sigma/2}$ at least for $\sigma\leq 2.$  (Of course, $0<\sigma\leq2$
is the physical range for dissipation.)  Alternatively, let $\varepsilon\in(0,1/2),$ and let $\omega_{0}$  be such that
\[
|\hat{\omega}_{0}(j)| =
\left\{
\begin{array}{ll}
|j|^{\frac{\sigma}{2}-\varepsilon} & \text{ if } |j|= 2^{\ell}, \; \ell = 1, 2, \ldots \\
0 & \text{ otherwise. }
\end{array}
\right.
\]
Then $\omega_{0}\in Y^{-\sigma/2}$ but $\omega_{0}\notin PM^{(1-\sigma)/2},$
and thus $\omega_{0}\notin PM^{r}.$

We now discuss critical spaces for the dissipative Constantin-Lax-Majda equation, with and without advection.  The Constantin-Lax-Majda
equation is meant as a one-dimensional model of vortex stretching for the three-dimensional Euler or Navier-Stokes equations.
For the three-dimensional Navier-Stokes equations, Lei and Lin have shown existence of global solutions with small data in $Y^{-1},$
and Cannone and Karch have shown existence of global solutions with small data in $PM^{2}$  \cite{cannoneKarch}, \cite{leiLin}.  These
are both critical spaces for the three-dimensional Navier-Stokes equations.

The CLM equation \eqref{gCLMd} has a natural scaling. Indeed, if $v=v(t,x)$ is a (strong) solution of \eqref{gCLMd}, then, for any $\lambda > 0$, $u(t,x) \equiv \lambda v(\lambda t, \lambda^{1/\sigma} x)$ is also a solution. Criticality, within a given scale of spaces, means the specific regularity parameter for which the spatial norm of initial data does not depend on $\lambda$. Regularity above this threshold is called subcritical, and below subcritical. Not every equation has a natural scaling. For example, the Kuramoto-Sivashinsky equation \eqref{mainEquation}, with $\nu>0$ does not have a natural scaling, but many physically relevant models possess a natural scaling, such as the Navier-Stokes system or the KdV equation.

For the spaces $Y^s$, it is easy to see that $Y^{-\sigma}$ is critical for \eqref{gCLMd}. This means that the existence result obtained here, for initial data in $Y^{-\sigma/2}$, despite improving previously known results, is still subcritical. For $PM^s$ criticality means $s=1-\sigma$, and our existence result in $PM^{(1-\sigma)/2}$ is subcritical as well.

\section*{Acknowledgments}
DMA is grateful to the National Science Foundation for support through grant
DMS-2307638.  DMA also thanks Jonathan Goodman for the suggestion of writing the first method
of the present paper in the form of an abstract existence result. MCLF was partially supported
by CNPq, through grant \# 304990/2022-1, and by FAPERJ, through  grant \# E-26/201.209/2021.
HJNL acknowledges the support of CNPq, through  grant \# 305309/2022-6, and that of FAPERJ,
through  grant \# E-26/201.027/2022.

\bibliographystyle{plain}
\bibliography{Otto-CLM.bib}

\end{document}